\newtheorem{thm}{Theorem}[section]
\newtheorem{cor}[thm]{Corollary}
\newtheorem{lem}[thm]{Lemma}
\newtheorem{prop}[thm]{Proposition}
\newtheorem{Def}[thm]{Definition}
\numberwithin{equation}{section}
\tikzset{sstate/.style={state, 
                inner sep=2pt, 
                minimum size=7pt,
                circular drop 
                shadow,fill=white}}
\def\NN{\mathbb{N}}
\def\QQ{\mathbb{Q}}
\def\RR{\mathbb{R}}
\def\ZZ{\mathbb{Z}}
\def\mm{{\mathcal M}}
\def\pp{{\mathcal P}}
\def\ds{\displaystyle}
\begin{document}
%
  
\title{On Sum Graphs over Some Magmas}


\author{António Machiavelo~\orcidlink{0000-0002-7595-7275}}
\email[corresponding author]{antonio.machiavelo@fc.up.pt}

\author{Rog\'{e}rio Reis~\orcidlink{0000-0001-9668-0917}}
\email{rogerio.reis@fc.up.pt}

\address{CMUP, Faculdade de Ciências da Universidade do Porto,
  4169-007 Porto, Portugal}

\begin{abstract}
  We consider the notions of \emph{sum graph} and of \emph{relaxed sum
    graph} over a magma, give several examples and results of these
  families of graphs over some natural magmas. We classify the cycles
  that are sum graphs for the magma of the subsets of a set with the
  operation of union, determine the abelian groups that provide a sum
  labelling of $C_4$, and show that $C_{4\ell}$ is a sum graph over
  the abelian group $\ZZ_f\times\ZZ_f$, where $f=f_{2\ell}$ is the
  corresponding Fibonacci number. For integral sum graphs, we give a
  linear upper bound for the radius of matchings, improving Harary's
  labelling for this family of graphs, and give the exact radius for
  the family of totally disconnected graphs.

  We found integer labellings for the 4D-cube, giving a negative
  answer to a question of Melnikov and Pyatikin, actually showing that
  the 4D-cube has infinitely many primitive labellings. We have also
  obtained some new results on mod sum graphs and relaxed sum
  graphs. Finally, we show that the direct product operation is closed
  for strong integral sum graphs.
\end{abstract}

\maketitle

\smallskip

\noindent \textbf{Keywords.} Magma, graph labelling, sum graph, integral sum
graph, direct product of graphs

%

\date{\today}


\section{Introduction}

Harary introduced the notions of a \textit{sum graph} and a
\textit{difference graph} in \cite{harary90}. In the first case, one
has a graph that can be labelled with elements of $\NN$, the set of
positive integers, in such a way that two vertices are adjacent if and
only if the sum of the labels of those vertices is a label of some
other vertex, while in the second case the absolute value of the
diference replaces the sum.  In Section $4$ of \cite{harary90}
(p.~105), Harary remarks that the notion of a sum graph can be
naturally extended to other ``number systems'', which in its wider
generality are magmas (cf.~Definition 1, p.~1 in
\cite{bourbaki1998algebra}).

In Section 2, we define the notion of a sum graph over a magma, as
well as the notion of a relaxed sum graph, giving some examples.

In Section 3, we deal with sum graphs over magmas with operations on
sets, showing that $C_n$, the cycle of length $n$, is a sum graph, for
$n\geq 4$, exactly when $n$ is even; characterising when the complete
graph, $K_m$, is a sum graph for the symmetric difference; and showing
that $C_4$ is not a sum graph for the magmas over sets with the
operation given by the complement of intersection, or of the union.

In Sections 4 to 8 we investigate when are cycle graphs sum graphs
over finite abelian groups. In particular, we show that $C_4$ is a sum
graph over an abelian group if and only if the order of the group is a
multiple of $5$; find a necessary condition for $C_n$ to be a
sum graph over an abelian group, and use this to give several
examples. Finally, we show that $C_{4\ell}$ is always a sum group over
an appropriate abelian group.

An integral sum graph, first defined in \cite{harary94}, is nothing
more that a sum graph over the additive group of the
integers. Melnikov and Pyatkin \cite{MelnikovP02} introduced the
notion of the radius of an integral sum graph, and in Section 9 we
give a linear upper bound for the radius of matchings, improving
Harary's labelling for this family of graphs. We also give the exact
radius for the family of totally disconnected graphs.  Using the Z3
problem solver, we found the exact number of integral sum graphs, and
of relaxed integral sum graphs, for all graphs with size up to 8, as
well as the number of integral for the cubic graphs with 4 to 12
vertices, and also integer labellings for the 4D-cube, $Q_4$, giving a
negative answering to a question of Melnikov and Pyatikin. Moreover,
we give the radius of $Q_4$, and show that it has an infinite number
of primitive labellings.

In Section 10, we indicate that $K_{3,3}$ is not a mod sum graph,
while $Q_3$ and the Petersen graph are mod sum graphs for appropriate
moduli. We give an explicit upper bound for the minimal modulus for
which a connected graph admits a sum labelling.

In Section 11, we report that $K_{3,3}$ is a relaxed mod sum graph,
while the triangular prism is not, and also give the exact number of
relaxed integral sum graphs for all cubic graphs with 4 to 14
vertices.

Finally, in Section 12, we show that the direct product of two strong
integral sum graphs is also a strong integral sum graph.

\section{Sum Graphs over Magmas}

The relevance of the notion of a sum graph is that it constitutes a
concise description for the graphs that admit a suitable
labelling. For this purpose, a magma is the more general structure
that accomodates this representation. Recall that a magma is just a
set equipped with a binary operation.

\begin{Def}
  \label{sumgraph}
  Given a magma $(M,\oplus)$ and $V\subseteq M$, we will denote by
  $\mathcal{G}_{(M,\oplus)}(V)$, or simply by $\mathcal{G}_{M}(V)$
  when the operation on $M$ is clear from the context, the (simple)
  graph $G=(V,E)$ whose edges are given by:
  \begin{equation}
    \label{sumgraph_c}
    (v,w)\in E \iff v\oplus w\in V \;\;\vee\;\; w\oplus v\in V.
  \end{equation}
  
  A graph $G$ is a \emph{sum graph over $M$}, or an \emph{$M-$graph},
  when $G=\mathcal{G}_M(V)$ for some $V\subseteq M$. A graph is said
  to be a \emph{strong $M-$graph} when, for any $v\in V$, $v\oplus
  v\not\in V$.

  A \emph{labelling} of a graph $G=(V,E)$ on a magma $M$ is a map
  $\lambda:V\to W$ with $W\subseteq M$ such that the graphs $G$ and
  $\mathcal{G}_{M}(W)$ are isomorphic. The labelling is called a
  \emph{strong labelling} if $\mathcal{G}_{M}(W)$ is a strong
  $M-$graph.
\end{Def}

\noindent\textit{Examples:}
\begin{enumerate}
\item The following is an example of a $(\ZZ,+)-$graph with the
  vertices identified with their labels, as we will always do
  throughout this paper:
  \begin{center}
    \begin{tikzpicture}
      [node distance=15mm, scale=1, auto=left]%
      \node(n1) {-3};%
      \node(n2) [right of=n1] {1}; %
      \node(n3) [below of=n2] {-2};%
      \node(n4) [left of=n3] {-1};%
      \node(n5) [right of=n2] {-4};%
      \node(n6) [right of=n3] {3};%
      \foreach \from/\to in {n1/n2, n2/n3, n3/n4,
        n4/n1,n2/n5,n3/n6,n5/n6} \draw (\from) -- (\to);
    \end{tikzpicture}
  \end{center}
 
\item In~\cite{harary94}, Harary points out that, letting
  $(a_n)_{n\in\NN}$ be inductively defined by $a_0=1$, $a_1=2$,
  $a_n = a_{n-2}-a_{n-1}$, for $n\geq 2$, the path graph, $P_n$, for
  $n\geq 4$, is a $(\ZZ,+)-$graph with the labelling given by this
  sequence. For instance:
  
\medskip

\begin{center}
  \begin{tikzpicture}
    [node distance=15mm, scale=1, auto=left]%
    \node(n0) {$P_6$:};%
    \node(n1) [right of=n0] {1}; %
    \node(n2) [right of=n1] {2};%
    \node(n3) [right of=n2] {-1};%
    \node(n4) [right of=n3] {3};%
    \node(n5) [right of=n4] {-4};%
    \node(n6) [right of=n5] {7};%
    \foreach \from/\to in {n1/n2, n2/n3, n3/n4, n4/n5,n5/n6} \draw
    (\from) -- (\to);
  \end{tikzpicture}
\end{center}
Harary does not provide a proof that this labelling does not generate
spurious edges. A proof was provided by Sharary for a family of close
related labellings (see Lemma 4 in \cite{sharary96}). Observe that the
maximum absolute of these labellings grows exponentially with the size
of the graph, but, as a consequence of the proof of Theorem 1 in
\cite{MelnikovP02}, it can be shown there exists a labelling that is
linear (see first paragraph in Section~\ref{radius} below).
\end{enumerate}

\noindent\emph{Remarks:}

\begin{itemize}
\item Every graph $G=(V,E)$ is an $M-$graph over some magma $M$. For
  example, one can define $M=\{v,\bullet\}$, for some fixed $v\in V$
  and the operation $\oplus$ on $M$ by: $a\oplus b=v$ if $(a,b)\in E$,
  and $a\oplus b=\bullet$ if $(a,b)\not\in E$.
\item Obviously, when $M$ is a commutative magma condition
  \eqref{sumgraph_c} simplifies to:
  $$(a,b)\in E \iff a\oplus b\in V.$$
\item A sum graph is simply a $(\NN,+)-$graph; an integral sum graph
  is a $(\ZZ,+)-$graph. We will simply call them $\NN-$graphs and
  $\ZZ-$graphs, respectively.
\item A difference graph \cite{harary90} is nothing else than an
  $(\NN,\ominus)-$graph, where $x\ominus y = |x-y|$.
\item A \emph{mod sum graph}, a concept introduced and studied in
  \cite{Boland90}, is a $(\ZZ_m,+)-$graph, for some $m\in\NN$ with
  $m\geq 2$ (the case $m=1$ is rather trivial). We will just call them
  $\ZZ_m-$graphs.
\item In \cite{deborah92} it was shown that $(\NN_{\geq
    2},\times)-$graphs are exactly the same as $(\NN,+)-$sum
  graphs.
\item In \cite{harary91reals} it was shown that a $(\RR^+,+)-$graph
  is a $(\NN,+)-$graph.
\item It is clear that any $\NN-$graph with more than one vertex
  cannot be connected, since the vertex with the maximum value cannot
  be adjacent to any other vertex. This leads to the notion of the
  \textit{sum number} of a given graph $G$, as the smallest number of
  isolated nodes which when added to $G$ yields a sum graph.  In
  \cite{harary94}, Harary states that the sum number of $C_n$ is $2$
  for all $n\geq 3$, except for $n=4$, in which case it is $3$. A
  proof of this statement was given in \cite[Theorem 3,
  p. 7]{sharary96}. As an example, the graph $C_4 \cup 3 K_1$ can be
  given as a $\NN-$graph in the following way, for instance:

\medskip

\begin{center}
  \begin{tikzpicture}
    [node distance=15mm, scale=1, auto=left]%
    \node(n1) {1};%
    \node(n2) [right of=n1] {3}; %
    \node(n3) [below of=n2] {6};%
    \node(n4) [left of=n3] {8};%
    \node(n5) [right of=n2] {4};%
    \node(n6) [below of=n5,yshift=7.5mm] {9};%
    \node(n7) [right of=n3] {14};%
    \foreach \from/\to in {n1/n2, n2/n3, n3/n4, n4/n1} \draw (\from)
    -- (\to);
  \end{tikzpicture}
\end{center}
\item According to Gallian \cite{gallian2022}, the notion of a strong
  sum graph was introduced in \cite{chang1993}. It was used in
  \cite{ellingham1993} to prove a conjecture of Harary, made in
  \cite{harary90}, that the sum number of every tree with at least two
  vertices is $1$.
\end{itemize}

The notion of a sum graph isomorphism is the natural one:

\begin{Def}\label{sumgraphiso}
  Given two sum graphs $\mathcal{G}_{(M,\oplus)}(V)$ and
  $\mathcal{G}_{(N,\odot)}(W)$, a bijective map $f:V\to W$ is said to
  be a \emph{sum graph isomorphism} if
  \begin{equation}\label{sumgraphiso_c}
    f(v_1\oplus v_2) = f(v_1) \odot f(v_2), \text{ for all } v_1,
    v_2\in V.
  \end{equation}
  If an isomorphim exists between the two sum graphs, we express that
  by writing
  $\mathcal{G}_{(M,\oplus)}(V)\simeq \mathcal{G}_{(N,\odot)}(W)$, and
  say that the two sum graphs are \emph{isomorphic}.
\end{Def}

The notion of a sum graph can be generalised by allowing distinct
vertices to have the same labelling, a concept that is obviously still
quite useful to give a concise description of a graph that allows such
a representation.

\begin{Def}\label{rxsumgraph}
  A \emph{relaxed sum graph over the magma $M$}, or
  $M-_{\rm rx}$\emph{graph} for short, is defined as in
  Definition~\ref{sumgraph} but by allowing $V$ to be a multiset with
  domain $M$.
\end{Def}

\noindent\textit{Examples:}
\begin{enumerate}
\item It is clear that every complete graph $K_n$ is a
  $\ZZ-_{\rm rx}$graph with the zero labelling, i.e.~all vertices are
  labelled with $0$. The fact that the complete graphs, a family
  easily described, are not integral sum graphs for $n\geq 4$
  (\cite{sharary96}) shows by itself the usefulness of the notion of
  relaxed sum graph.
\item Here is an example of a graph, a cubic graph, that is not a
  $\ZZ-$graph, but is a $\ZZ-_{\rm rx}$graph:
  \begin{center}
    \begin{tikzpicture}
      [node distance=15mm, scale=0.6, auto=left]%
      \node(n1) at (0,0) {-3};%
      \node(n2) at (2,-2) {-15};%
      \node(n3) at (2,2) {-11};%
      \node(n4) at (2,0) {-3};%
      \node(n5) at (4,2) {5};%
      \node(n6) at (4,0) {-1};%
      \node(n7) at (4,-2) {4};%
      \node(n8) at (6,0) {-14};%
      \node(n9) at (8,-1) {3};%
      \node(n10) at (8,1) {3};%
      \node(n11) at (10,2) {-6};%
      \node(n12) at (10,-2) {-18};%
      \foreach \from/\to in
      {n1/n2,n1/n3,n1/n4,n2/n4,n3/n4,n3/n5,n2/n7,n5/n6,n6/n7,n5/n11,
        n7/n12,n6/n8,n8/n9,n8/n10,n10/n12,n9/n11,n10/n11,n9/n12}
      \draw (\from) -- (\to);%
    \end{tikzpicture}
  \end{center}
  This example was found with the help of the Z3 Theorem Prover, as
  explained below, in Section~\ref{Z3section}.
\end{enumerate}

\section{Magmas on Sets}
\subsection{The Union and the Intersection}

Let $U(S)=(\pp(S),\cup)$ be the magma of the subsets of a set $S$ with
the union operation, and $I(S)=(\pp(S),\cap)$. The map
$\Psi:U(S)\to I(S)$ given by $\Psi(A)=\bar{A}$ is a magma isomorphism,
and thus a graph is a sum graph over $U(S)$ if and only if it is a sum
graph for $I(S)$.

For $U(S)$, the complete graphs $K_n$ are sum graphs for all
$n\leq |S|+1$, as it is enough to consider a chain of $n$ sets.

An example of a $C_4$ sum graph over $U(S)$, when $S$ has at least $4$
elements, is the following, where $A,B,C,D\subseteq S$ are four
disjoint subsets of $S$:

\medskip

\begin{center}
  \begin{tikzpicture}
    [node distance=2cm, scale=1, auto=left]%
    \node(n1) {$A$};%
    \node(n2) [right of=n1] {$A\cup B\cup C$}; %
    \node(n3) [below of=n2] {$C$};%
    \node(n4) [left of=n3] {$A\cup C\cup D$};%
    \foreach \from/\to in {n1/n2, n2/n3, n3/n4, n4/n1} \draw (\from)
    -- (\to);
  \end{tikzpicture}
\end{center}
Actually, one has:

\begin{prop}
  When $n\geq 5$ is an odd integer, the graph $C_n$ is not a sum graph
  over any magma of subsets of a set with the union operation. The
  graph $C_{2k}$ is a sum graph over that magma for a set with $2k$
  elements.
\end{prop}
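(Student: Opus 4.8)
The plan is to split the statement into its two halves and handle them separately. For the negative part, suppose $C_n = \mathcal{G}_{U(S)}(V)$ with $n \geq 5$ odd, where $V = \{V_1, \dots, V_n\}$ are subsets of $S$ arranged so that consecutive $V_i$'s (cyclically) are adjacent. Since union is commutative, adjacency of $V_i$ and $V_j$ means $V_i \cup V_j \in V$. The key observation I would exploit is a size/containment argument: $V_i \cup V_j \supseteq V_i$ and $V_i \cup V_j \supseteq V_j$, so if $V_i \cup V_j = V_k$ then $V_k$ contains both its neighbours in a strong sense. First I would look at a vertex $V_m$ whose cardinality is maximal among all the $V_i$; its two cycle-neighbours $V_a, V_b$ satisfy $V_a \cup V_m, V_m \cup V_b \in V$, but these unions have cardinality $\geq |V_m|$, hence equal $|V_m|$, forcing $V_a \subseteq V_m$ and $V_b \subseteq V_m$. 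I would then chase the consequences of such containments around the cycle, using that $C_n$ has exactly the $n$ cycle-edges and no others (no chords), to derive that the union of any two non-adjacent vertices must fail to land in $V$. The parity of $n$ should enter precisely here: a containment-propagation or $2$-colouring argument around an odd cycle yields a contradiction, much as the bipartiteness obstruction does; I expect the cleanest route is to show the relation ``$V_i \subseteq V_j$ or the union creates a forbidden chord'' induces an orientation of the edges of $C_n$ that cannot be consistent on an odd cycle.

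For the positive part, I would exhibit an explicit labelling of $C_{2k}$ over $\mathcal{P}(S)$ with $|S| = 2k$, generalising the displayed $C_4$ example. Label $S = \{1, 2, \dots, 2k\}$ and put the $2k$ vertices of the cycle in order as singletons and ``large'' sets alternately: at the even positions use singletons $\{2j\}$, and at the odd positions use sets of the form $S \setminus \{\text{two chosen elements}\}$, engineered so that the union of two cyclically adjacent vertices is again one of the listed sets (typically the full set $S$ or a near-full set already appearing in the list), while the union of any two non-adjacent vertices is some subset of $S$ that is \emph{not} in the list. Concretely, a singleton $\{x\}$ unioned with its large neighbour $S \setminus \{x, y\}$ gives $S \setminus \{y\}$, which we arrange to be the label of the appropriate vertex; any other union overshoots (contains too many elements, or the ``wrong'' deficient element) and so is absent from $V$.

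The main obstacle I anticipate is the negative direction: making the containment/parity argument airtight, i.e.\ proving that \emph{no} assignment of subsets to an odd cycle avoids spurious chords, rather than just ruling out the obvious symmetric ones. The delicate point is that $V_i \cup V_j = V_k$ can hold with $k$ \emph{adjacent} to one of $i, j$ (a legitimate edge) and the bookkeeping of which unions are allowed to coincide with which labels must be tracked carefully; I would organise this by ordering the vertices by cardinality and arguing that maximal-cardinality vertices and their neighbourhoods force a structure (a ``$\subseteq$-monotone'' stretch of the cycle) that, continued, must wrap around and clash with itself when the cycle length is odd. For the positive direction the only work is verifying no spurious edges appear, which is a finite check once the labelling pattern is fixed, so I would present the labelling and then verify the unions in two cases (adjacent vs.\ non-adjacent) directly.
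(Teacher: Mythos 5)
Your negative-direction plan points in the same direction as the paper's proof, but it is missing the one lemma that makes the rest routine, and you yourself flag this as the place you are unsure. The paper's first step is: since $C_n$ ($n\geq 4$) is triangle-free, for \emph{any} two adjacent vertices $a,b$ the union $a\cup b$ must equal $a$ or $b$; for if $a\cup b=c$ were a third vertex, then $a\cup c=c\in V$ and $b\cup c=c\in V$, so $a,b,c$ would form a triangle. Hence \emph{every} edge of the cycle is a containment. Your cardinality argument establishes this only for the edges at a vertex of maximal cardinality; you never justify why, elsewhere on the cycle, the union of two adjacent vertices cannot be a third vertex, which is exactly the bookkeeping issue you anticipate. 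Once every edge is a containment, chordlessness forces the containments to alternate (starting from a minimal vertex, $v_1\subset v_2\supset v_3\subset\cdots$), because two consecutive containments in the same direction give $v_{i-1}\cup v_{i+1}\in\{v_{i-1},v_{i+1}\}\subseteq V$ and hence the chord $v_{i-1}v_{i+1}$; the zigzag cannot close on an odd cycle, which is your orientation idea made precise. So the idea is recoverable, but as written the propagation step is a genuine gap.

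The positive direction is where your proposal actually fails. With $k$ singletons and $k$ sets of the form $S\setminus\{x,y\}$, a singleton $\{a\}$ and a \emph{non-adjacent} big set $L$ with $a\in L$ satisfy $\{a\}\cup L=L\in V$, a spurious chord. Avoiding this forces every big set to omit the elements of all $k-2$ singletons it is not adjacent to, yet it omits only two elements of $S$; this is impossible for $k\geq 5$. (If instead you intend the adjacent union $\{a\}\cup(S\setminus\{a,y\})=S\setminus\{y\}$ to be a vertex, no vertex of your list has that form.) The paper's construction goes the other way around: take pairwise disjoint sets $A_1,\dots,A_k,B_1,\dots,B_k$ (singletons suffice, using $2k$ elements) and set $v_{2i-1}=A_i$ and $v_{2i}=A_i\cup A_{i+1}\cup B_i$ with $A_{k+1}=A_1$. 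Every union of adjacent vertices equals the even-indexed vertex of the pair, and the private markers $B_i$ ensure that no union of non-adjacent vertices is a label.
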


\proof We start by noticing that, in such a sum graph that contains no
triangles, if $a$ and $b$ are two adjacent vertices, then $a\cup b$
cannot be distinct from both $a$ and $b$. Therefore, given two
adjacent vertices, one of them must contain the other.

Suppose now that $C_n$ is a sum graph, and let $v_1$ be a vertex that
is minimal for inclusion. Then $v_2$ contains $v_1$, and $v_3$ cannot
contain $v_2$ because that would create a triangle, so that $v_3$ must
be contained in $v_2$. The same reasoning applies to every consecutive
vertex, and therefore the Hasse diagram of the vertices must have the
shape:
\begin{center}
  \begin{tikzpicture}
    [node distance=2cm, scale=1, auto=left]%
    \node(v1) {$v_1$};%
    \node(v2) [above right of=v1,xshift=-4mm] {$v_2$}; %
    \node(v3) [right of=v1] {$v_3$};%
    \node(v4) [right of=v2] {$v_4$};%
    \node(v5) [right of=v3] {$v_5$};%
    \node(v6) [right of=v4] {};%
    \path[-] (v1)edge(v2)%
    (v2)edge(v3)%
    (v3)edge(v4)%
    (v4)edge(v5);
    \draw[dashed] (v5) -- (v6);  
\end{tikzpicture}
\end{center}
There are two cases to consider: either $v_n$ is in the lower or in
the upper level of this diagram. In the first case, it cannot be
connected to $v_1$, as that would entail the edge $v_n v_2$, showing
that $C_{2n+1}$ is not a sum graph for $n\geq 2$. In the second case
one can give the following construction of $C_{2k}$: take pairwise
disjoint subsets $A_1, \ldots,A_k, B_1, \ldots, B_k$ (each can be a
singleton, for example) and set $v_{2i-1} = A_i$ and
$v_{2i}= A_i\cup A_{i+1}\cup B_i$, for $i=1,\ldots,k$, where we set
$A_{k+1} = A_1$.
\qed{}

\subsection{The Symmetric Difference}

Let $S$ be a finite set, and consider the group given by the symmetric
difference on the subsets of $S$. Note that the empty set is the
neutral element in this group, while every element is its own
inverse. A group in which every element, other than the identity, has
order 2 is called a \emph{Boolean group}, and it is easy to see that
every finite Boolean group is isomorphic to the group formed by the
subsets of some finite set, with the symmetric difference.

It is clear that, for all $n\in\NN$, the complete graph $K_{2^n}$ is a
sum graph for any of these groups. By removing the vertex
corresponding to the neutral element, one sees that the graph
$K_{2^n-1}$ is also a sum graph. We now show that there are no other
complete graphs that are sum graphs for Boolean groups.


\begin{prop}
  A graph $K_m$ is a sum graph over some magma of subsets of a set $S$
  with the symmetric difference if and only if $m = 2^k-\varepsilon$
  for $\varepsilon\in\{0,1\}$.
\end{prop}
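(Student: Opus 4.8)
The plan is to show both directions. For the "if" direction, the constructions are already essentially given in the text: a Boolean group of order $2^k$ contains a chain... no, wait, in a Boolean group there is no chain, but one takes the whole group $V = M$ itself as the vertex set, since for any $a,b \in M$ with $a \neq b$ one has $a \oplus b \in M$, and $a \oplus b \neq a, b$ (as $b, a \neq 0$), so $\mathcal{G}_M(M) = K_{2^k}$; removing the identity $0$ gives $K_{2^k-1}$ because $a \oplus b$ for distinct nonzero $a,b$ is either $0$ or a third nonzero element, and in a Boolean group the only way $a \oplus b = 0$ is $a = b$. So the "if" direction is handled; I would just restate it cleanly.

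For the "only if" direction, suppose $K_m = \mathcal{G}_M(V)$ for some $V \subseteq M$ where $(M, \oplus)$ is a Boolean group (WLOG finite, generated by $V$). The key point: for any two distinct $a, b \in V$, since $ab$ (writing the operation multiplicatively) must lie in $V$ for the edge to exist, and $ab \neq a$, $ab \neq b$ automatically unless one of $a,b$ is the identity. So $V \cup \{1\}$ — or rather, I claim $V$ together with the identity, or $V$ itself, is almost a subgroup. More precisely: let $H = \langle V \rangle$. I want to show that $V = H$ or $V = H \setminus \{1\}$. Take $a, b \in V$ distinct and $\neq 1$; then $ab \in V$. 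This shows $V \setminus \{1\}$ is closed under the operation restricted to distinct pairs; combined with $a \cdot a = 1$, the set $V \cup \{1\}$ is closed under $\oplus$, hence is a subgroup $H$, so $|V \cup \{1\}| = 2^k$ for some $k$. Then either $1 \in V$, giving $m = |V| = 2^k$, or $1 \notin V$, giving $m = |V| = 2^k - 1$. The one subtlety is ruling out that two distinct vertices get "merged" — but since we're in the non-relaxed setting $V$ is a genuine set, and the edge condition forces $ab \in V$ whenever $a,b$ adjacent, i.e. whenever $a \neq b$, which is exactly what's needed.

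The main obstacle, and the thing to be careful about, is the handling of the identity element $1 = \emptyset$: whether it belongs to $V$ and the degenerate products $a \oplus a = 1$ and $a \oplus 1 = a$. One must check that $V \cup \{1\}$ is genuinely closed: for distinct nonidentity $a, b$ it's the edge condition; for $a \oplus a$ it's $1 \in V \cup \{1\}$; for $a \oplus 1 = a$ it's trivial; for $1 \oplus 1 = 1$ trivial. So $V \cup \{1\} =: H$ is a finite subgroup of a Boolean group, hence $|H| = 2^k$, and $m = |V| \in \{2^k - 1, 2^k\}$, i.e. $m = 2^k - \varepsilon$ with $\varepsilon \in \{0,1\}$. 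I do not anticipate any serious difficulty beyond this bookkeeping.

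Here is the proof.

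\proof
($\Leftarrow$) Let $M$ be the Boolean group of subsets of a finite set $S$ under symmetric difference, with $|S| = k$, so $|M| = 2^k$, and write the operation as $\oplus$ with identity $\emptyset$; recall that $a \oplus a = \emptyset$ for every $a$. Taking $V = M$, for any two distinct $a, b \in M$ one has $a \oplus b \in M = V$, so $(a,b)$ is an edge; hence $\mathcal{G}_M(M) = K_{2^k}$. Removing the vertex $\emptyset$, set $V = M \setminus \{\emptyset\}$: for distinct $a, b \in V$, $a \oplus b \in M$, and $a \oplus b = \emptyset$ would force $a = b$, so $a \oplus b \in V$ and $(a,b)$ is again an edge; hence $\mathcal{G}_M(V) = K_{2^k - 1}$.

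($\Rightarrow$) Suppose $K_m \simeq \mathcal{G}_M(V)$ where $(M, \oplus)$ is a magma of subsets of some set with the symmetric difference; replacing $M$ by the subgroup it generates, we may assume $M$ is a finite Boolean group with identity $e$. Put $H = V \cup \{e\}$. We check $H$ is closed under $\oplus$. If $a, b \in V$ are distinct, then $(a,b)$ is an edge of $K_m$, so $a \oplus b \in V \subseteq H$. If $a \in V$, then $a \oplus a = e \in H$. Finally $a \oplus e = a \in H$ for $a \in H$, and $e \oplus e = e \in H$. Thus $H$ is a finite sub-magma of a Boolean group containing $e$ and closed under $\oplus$; since every element is its own inverse, $H$ is a subgroup, so $|H| = 2^k$ for some $k \in \NN$. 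As $m = |V|$ and $H = V \cup \{e\}$, we get $m = 2^k$ if $e \in V$ and $m = 2^k - 1$ if $e \notin V$; in either case $m = 2^k - \varepsilon$ with $\varepsilon \in \{0,1\}$.
\qed{}
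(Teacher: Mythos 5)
Your proof is correct and follows essentially the same route as the paper: the paper's argument also observes that the vertex set, together with the empty set if necessary, is closed under symmetric difference and hence a subgroup of the Boolean group, so its order is a power of $2$ (the paper phrases this via Lagrange's theorem), with the converse given by the same two constructions $V=M$ and $V=M\setminus\{\emptyset\}$. Your write-up is merely more explicit about the bookkeeping around the identity element.
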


\proof Suppose $K_m$ is a sum graph over the group $A$ of subsets of a
set $S$ with $n$ elements. Since in these groups each element is its
own inverse, and the operation restricted to the vertices of the graph
is closed, the subset formed by the vertices is either a subgroup of
$A$, if the empty set is one of the vertices, or, otherwise, becomes a
subgroup when adding the empty set. Thus, by Lagrange's theorem,
either $m$ or $m+1$ divides $2^n$. \qed
\subsection{The Complements of the Intersection and of the Union}

Consider the magmas $\overline{U}(S) = (\pp(S),\overline{\cap})$ and
$\overline{I}(S) = (\pp(S),\overline{\cup})$ on the subsets of a set
$S$ with the operations given by
$A\,\overline{\cap}\, B = \overline{A\cap B}$ and
$A\,\overline{\cup}\, B = \overline{A\cup B}$, respectively, where the
overline denotes the complement.

The map $\Psi:\overline{U}(S)\to \overline{I}(S)$ given by
$\Psi(A)=\bar{A}$ is a magma isomorphism, and thus a graph is a sum
graph over $\overline{U}(S)$ if and only if it is so for
$\overline{I}(S)$.

\begin{prop}
  The graph $C_4$ is not a sum graph over a magma $\overline{U}(S)$.
\end{prop}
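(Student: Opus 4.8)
The plan is to argue by contradiction. Suppose $C_4$ is a sum graph over $\overline{U}(S)$, realised by four distinct subsets $v_1,v_2,v_3,v_4$ of $S$ forming the $4$-cycle in that cyclic order (so the edges are $v_1v_2,\ v_2v_3,\ v_3v_4,\ v_4v_1$ and the non-edges are $v_1v_3,\ v_2v_4$); note $|S|\geq 2$ since there are four distinct subsets, so no subset equals its complement. Since $\overline{\cap}$ is commutative, $v_i\sim v_j\iff\overline{v_i\cap v_j}\in V$. I would first eliminate the extreme labels. If some $v_i=S$, then by vertex-transitivity of $C_4$ we may take $v_1=S$, so that $v_1\sim v_j\iff\bar v_j\in V$; feeding in the two edges and the non-edge at $v_1$, together with the non-edge $\{v_2,v_4\}$, forces in a few lines that $\bar v_2=v_4$, whence $v_2\cap v_4=\emptyset$ and $\overline{v_2\cap v_4}=S=v_1\in V$, contradicting $v_2\not\sim v_4$. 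So no vertex is $S$; and then no vertex is $\emptyset$ either, since a neighbour $v_j$ of an empty vertex would give $\overline{\emptyset\cap v_j}=S\in V$.

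With all $v_i\notin\{\emptyset,S\}$, observe that for an edge $\{v_i,v_j\}$ the set $\overline{v_i\cap v_j}=\bar v_i\cup\bar v_j$ contains the nonempty set $\bar v_i$, hence it equals neither $v_i$ nor $v_j$, so it is one of the two remaining vertices. Thus each edge of the cycle ``points'' to one of the two vertices not on it, and $\overline{v_i\cap v_j}=v_k$ amounts precisely to $v_i\cap v_j=\bar v_k$. The crucial step is the following lemma: if $v_r$ is a common neighbour of the two endpoints $v_p,v_q$ of a non-edge, and the edge $\{v_r,v_p\}$ points to $v_q$ while $\{v_r,v_q\}$ points to $v_p$ (that is, $v_r\cap v_p=\bar v_q$ and $v_r\cap v_q=\bar v_p$), then $\bar v_r\subseteq v_p\cap v_q$ (because $v_p\cup v_r=v_q\cup v_r=S$) and $v_p\cap v_q\cap v_r=(v_r\cap v_p)\cap v_q=\bar v_q\cap v_q=\emptyset$; hence $v_p\cap v_q=\bar v_r$, so $\overline{v_p\cap v_q}=v_r\in V$ and $\{v_p,v_q\}$ is an edge — a contradiction.

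The final step is to see which ``pointing'' configurations escape the lemma. Call the choice of the edge $\{v_i,v_{i+1}\}$ \emph{forward} if it points to $v_{i+2}$ and \emph{backward} if it points to $v_{i-1}$; then the lemma applies at $v_r$ exactly when the edge $\{v_{r-1},v_r\}$ is forward and $\{v_r,v_{r+1}\}$ is backward, and reading around the cycle such a forward-then-backward consecutive pair exists unless all four edges are forward or all four are backward. For the all-forward configuration the equations are $v_1\cap v_2=\bar v_3$, $v_2\cap v_3=\bar v_4$, $v_3\cap v_4=\bar v_1$, $v_4\cap v_1=\bar v_2$; solving the first two gives $v_3=\bar v_1\cup\bar v_2$ and then $v_4=v_1\cup\bar v_2$, whence $v_3\cap v_4=\bar v_2$, which by the third equation must be $\bar v_1$, forcing $v_1=v_2$ — impossible. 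The all-backward configuration is carried onto the all-forward one by the $C_4$-automorphism fixing $v_1,v_3$ and swapping $v_2,v_4$, so it is excluded in the same way.

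I expect the genuine obstacle to be precisely these two ``uniform'' configurations, where the non-edge lemma is powerless and one must instead eliminate $v_3,v_4$ in terms of $v_1,v_2$ by an explicit computation; the bookkeeping that isolates these two among the sixteen choices, and the opening reduction ruling out $\emptyset$ and $S$, are routine by comparison.
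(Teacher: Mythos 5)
Your proof is correct, but it takes a genuinely different route from the paper's. Both arguments run on the same engine --- for an edge $\{v_i,v_j\}$ the sum $\overline{v_i\cap v_j}=\bar v_i\cup\bar v_j$ must be one of the two remaining vertices (neither $\emptyset$ nor $S$ can occur as a label), and forced sums then create diagonal edges --- but the paper's case analysis is local and very short: it fixes one edge $AB$, places its sum $\bar A\cup\bar B$ without loss of generality adjacent to $B$, and then a single two-way split on $B\,\overline{\cap}\,(\bar A\cup\bar B)=\bar B\cup A$ (either the fourth vertex $C$, forcing the diagonal $BC$, or the vertex $A$, whence $\bar B\subseteq A$, $\bar A\subseteq B$ and the other diagonal appears) finishes the proof. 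Your version is global: you classify all $2^4$ orientation patterns of the four edges, dispose of every pattern containing a forward-then-backward pair via your non-edge lemma, and kill the two uniform patterns by explicit algebra; I checked the lemma ($\bar v_r\subseteq v_p\cap v_q$ together with $v_p\cap v_q\cap v_r=\emptyset$ gives $v_p\cap v_q=\bar v_r$), the reduction to the all-forward and all-backward cases, and the computation forcing $v_1=v_2$, and all are sound. The paper's second case is in fact an instance of your lemma at the vertex $B$, so the overlap is real; what the paper's WLOG buys is brevity (it never enumerates orientations), while your version makes the exhaustiveness of the case analysis fully explicit and isolates a lemma that could conceivably be reused for longer cycles or related set magmas. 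The only step you leave compressed --- deducing $\bar v_2=v_4$ when $v_1=S$ --- does go through in the few lines you promise.
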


\proof Suppose there is such a magma $\overline{U}(S)$, and let $A$
and $B$ be two adjacent vertices. Their composition
$\bar{A}\cup \bar{B}$ must be a third vertex, since
$\bar{A}\cup \bar{B} = A$ (for example) implies $A=S$ and
$B=\emptyset$, and then all edges incident to $\emptyset$ would be
present in the graph. Without loss of generality, we may assume that
the labelling of $C_4$ is as follows:
\begin{center}
  \begin{tikzpicture}
    [node distance=2cm, scale=1, auto=left]%
    \node(n1) {$A$};%
    \node(n2) [right of=n1] {$B$}; %
    \node(n3) [below of=n2] {$\bar{A}\cup\bar{B}$};%
    \node(n4) [left of=n3] {$C$};%
    \foreach \from/\to in {n1/n2, n2/n3, n3/n4, n4/n1} \draw (\from)
    -- (\to);
  \end{tikzpicture}
\end{center}
Now, $B\,\overline{\cap}\,(\bar{A}\cup\bar{B}) = \bar{B}\cup A$ is
either $C$ or $A$, because, as we just saw one cannot have
$X\,\overline{\cap}\, Y = X$, for any vertices $X$ and $Y$.

In the first case, $C=\bar{B}\cup A$, the edge $BC$ would be
present. In the other case, one would have $\bar{B}\subseteq A$, and
therefore $\bar{A}\subseteq B$, which entails that the diagonal edge
opposed to $BC$ would be present.  \qed

\section{The $C_4$ Problem on Abelian Groups}

As mentioned above, $C_4$ is not a sum graph, i.e.~an $\NN$-graph. It
is not even a $\ZZ-$graph, as stated by Harary in \cite{harary94} and
proved by Sharary in \cite[Theorem 3]{sharary96}. For a short and
elegant proof of this, see section 2 of \cite{MelnikovP02}.

Let us, then, consider the problem of determing the abelian groups $A$
such that $C_4$ is an $A-$graph.  Assume one has such a group $A$, and
let $V=\{a,b,c,d\}\subseteq A$. Since $0$, the neutral element of $A$,
cannot be in $V$, as it would be adjacent to every vertex, it is easy
to see that there are only two cases to consider:

\vspace{3mm}

\begin{center}
  \begin{tabular}{cc}
    \lower 35pt\hbox{\begin{tikzpicture}
        [node distance=2cm, scale=1, auto=left, every
        node/.style={circle, fill=black!10}]%
        \node (n1) {$a$};%
        \node (n2) [right of=n1] {$b$}; %
        \node (n3) [below of=n2] {$c$};%
        \node (n4) [left of=n3] {$d$};%
        \foreach \from/\to in {n1/n2, n2/n3, n3/n4, n4/n1}
        \draw (\from) -- (\to);
      \end{tikzpicture}}\qquad{} %
    & $\begin{array}{ccc}
         \begin{cases}
           a+b=c\\
           b+c=d\\
           a+d=b\\
           c+d=a
         \end{cases}
         &\quad\text{or}\qquad
         &
           \begin{cases}
             a+b=d\\
             b+c=a\\
             a+d=c\\
             c+d=b.
           \end{cases}
       \end{array}
    $
  \end{tabular}
\end{center}

From these one deduces that one must have, in both cases, $5 a=0$
(with $a\neq 0$), and, respectively,
\begin{equation*}
  \begin{array}{ccc}
    \begin{cases}
      b=3a\\
      c=4a\\
      d=2a
    \end{cases}
    &\quad\text{or}\qquad
    &
      \begin{cases}
        b=2a\\
        c=4a\\
        d=3a.
      \end{cases}
  \end{array}
\end{equation*}

Note that both give rise to the same set of vertices and, in both
cases, the sums of the diagonally opposed vertices are $0$, which
ensures that resulting graph is indeed a $C_4$. This proves the
following.

\begin{prop}
  \label{C4groups}
  If $A$ is an abelian group of order $n$, then $C_4$ is a $A-$graph
  if and only if $5 \mid n$.
\end{prop}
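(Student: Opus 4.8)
The plan is to prove the two implications separately, using the analysis already carried out in the excerpt for one direction and an explicit construction for the other.

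\smallskip

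\emph{($\Rightarrow$) If $C_4$ is an $A$-graph, then $5\mid n$.} First I would invoke the case analysis preceding the proposition: if $V=\{a,b,c,d\}$ realises $C_4$ as an $A$-graph, then, after relabelling, one of the two displayed systems holds. I would then carry out the short elimination (already indicated in the text) to conclude that in both cases $\{a,b,c,d\}=\{a,2a,3a,4a\}$ with $5a=0$ and $a\neq 0$. Since these four elements are pairwise distinct (otherwise $ka=0$ for some $1\le k\le 4$, forcing, together with $5a=0$, that $a=0$), the element $a$ has order exactly $5$ in $A$, so by Lagrange's theorem $5\mid n$.

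\smallskip

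\emph{($\Leftarrow$) If $5\mid n$, then $C_4$ is an $A$-graph.} Here I would use Cauchy's theorem (or the structure theorem for finite abelian groups) to pick an element $a\in A$ of order $5$, and set $V=\{a,2a,3a,4a\}$. It then remains to check that $\mathcal{G}_A(V)$ is exactly $C_4$: one verifies that $a+3a=4a\in V$, $3a+4a=2a\in V$, $a+2a=3a\in V$, $2a+4a=a\in V$ give the four cycle edges, while the two diagonals $a+4a=0\notin V$ and $2a+3a=0\notin V$ are absent. (It is worth noting that, in this labelling, $v\oplus v$ may land in $V$ — e.g. $a+a=2a\in V$ — so this is not a \emph{strong} labelling, but the definition of sum graph does not require that.) This shows $\mathcal{G}_A(V)\simeq C_4$.

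\smallskip

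I do not anticipate a serious obstacle: the forward direction is essentially the computation already sketched in the excerpt, and the backward direction is a one-line construction once Cauchy's theorem supplies an order-$5$ element. The only point requiring a little care is making fully rigorous the claim ``one of the two systems must hold'' — i.e.\ that in a $C_4$ labelling over an abelian group the sum of two adjacent vertices cannot equal one of those two vertices (which would force the neutral element $0\in V$, contradicting that $0$ is adjacent to everything) and cannot be a non-adjacent vertex without creating a chord — but this is exactly the reduction performed in the paragraph preceding the proposition, so I would simply cite it.
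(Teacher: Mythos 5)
Your proof is correct and takes essentially the same route as the paper: the forward direction is the paper's own two-case elimination yielding $5a=0$ with $a\neq 0$ (hence an element of order $5$ and $5\mid n$ by Lagrange), and the backward direction is the same construction $V=\{a,2a,3a,4a\}$ with the diagonal sums equal to $0$. You merely make explicit the appeals to Lagrange and Cauchy that the paper leaves implicit.
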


\textit{Example}: Choosing $a=1$ in $\ZZ_5$ and $a=3$ in $\ZZ^*_{11}$,
one obtains the following isomorphic sum graphs over these groups:
\begin{center}
  \begin{tikzpicture}
    [node distance=2cm, scale=1, auto=left]%
    \node(n1) {1};%
    \node(n2) [right of=n1] {3}; %
    \node(n3) [below of=n2] {4};%
    \node(n4) [left of=n3] {2};%
    \node at (1,-1) {$\ZZ_5$};%
    \foreach \from/\to in {n1/n2, n2/n3, n3/n4, n4/n1} \draw (\from)
    -- (\to);
    \end{tikzpicture}
    \hspace{8mm} \raise 35pt\hbox{$\simeq$} \hspace{8mm}
    \begin{tikzpicture}
    [node distance=2cm, scale=1, auto=left]%
    \node(n1) {3};%
    \node(n2) [right of=n1] {5}; %
    \node(n3) [below of=n2] {4};%
    \node(n4) [left of=n3] {9};%
    \node at (1,-1) {$\ZZ^*_{11}$};%
    \foreach \from/\to in {n1/n2, n2/n3, n3/n4, n4/n1}
    \draw (\from) -- (\to);
  \end{tikzpicture}
\end{center}

\section{The $C_n$ Problem}

Given $n\in\NN$, let us consider the problem of finding an abelian
group $A$ for which $C_n$ is a $A-$graph. We will restrict our search
to \emph{Fibonacci labellings} of cyclic graphs, i.e. labbelings in
which each label is the sum of the two previous labels, when going
through the vertices sequentially, clockwise or otherwise. Here is an
example of such a Fibonacci labelling of $C_7$ over the group
$\ZZ_{29}$, and a non-Fibonacci labelling of that same graph, but over
$\ZZ_{7}$, in which we used SageMath~\cite{sage} to confirm the
non-existence of spurious edges:
\begin{center}
  \begin{tabular}{c@{\hskip 2cm}c}
    \begin{tikzpicture}[roundnode/.style={circle, fill=black!10}]
      \node[draw=none,minimum size=3cm,regular polygon,regular polygon
      sides=7] (a) {};
      \node(n1) at (a.corner 1){$1$};%
      \node(n2) at (a.corner 7){$24$};%
      \node(n3) at (a.corner 6){$25$};%
      \node(n4) at (a.corner 5){$20$};%
      \node(n5) at (a.corner 4){$16$};%
      \node(n6) at (a.corner 3){$7$};%
      \node(n7) at (a.corner 2){$23$};%
      \node at (0,0) {$\ZZ_{29}$};%
      \foreach \from/\to in {n1/n2, n2/n3, n3/n4, n4/n5, n5/n6, n6/n7,
        n7/n1} \draw (\from) -- (\to);
    \end{tikzpicture}
    &
      \begin{tikzpicture}[roundnode/.style={circle, fill=black!10}]
        \node[draw=none,minimum size=3cm,regular polygon,regular
        polygon sides=7] (a) {};
        \node(n1) at (a.corner 1){$1$};%
        \node(n2) at (a.corner 2){$6$};%
        \node(n3) at (a.corner 3){$12$};%
        \node(n4) at (a.corner 4){$3$};%
        \node(n5) at (a.corner 5){$15$};%
        \node(n6) at (a.corner 6){$7$};%
        \node(n7) at (a.corner 7){$5$};%
        \node at (0,0) {$\ZZ_{17}$};%
        \foreach \from/\to in {n1/n2, n2/n3, n3/n4, n4/n5, n5/n6,
          n6/n7, n7/n1} \draw (\from) -- (\to);
      \end{tikzpicture}
  \end{tabular}
\end{center}

If one denotes the vertices of the $n$-gon by $(a_i)_{i\in\ZZ_n}$, and
decide that $a_{i+1} = a_i + a_{i-1}$, one gets:
\begin{eqnarray*}
  a_2 &=& a_0 + a_1\\
  a_3 &=& a_1 + a_2\\
      &\vdots&\\
  a_{n-1} &=& a_{n-3} + a_{n-2}\\
  a_0 &=& a_{n-2} + a_{n-1}\\
  a_1&=& a_{n-1} + a_0.
\end{eqnarray*}
From these, it is easy to see that $a_i = f_{i-1} a_0 + f_i a_1$ for
$i = 2,\ldots, n-1$, $a_0 = f_{n-1} a_0 + f_n a_1$, and
$a_1 = f_n a_0 + f_{n+1} a_1$, where $f_n$ is the usual $n$-th
Fibonacci number. These last two equalities yield
\begin{equation}
  \label{eq:1}
  f_n a_1 =  (1-f_{n-1}) a_0 \quad \text{ and } \quad (f_{n+1}-1) a_1
  = - f_n a_0,
\end{equation}
which are, hence, necessary conditions for a Fibonacci labelling of
$C_n$ to exist.

Let $d = (f_n, f_{n+1}-1)$ and $x, y\in\ZZ$ such that
$d = x f_n + y (f_{n+1}-1)$. Let $z = (1-f_{n-1}) x -f_n y$. From
equations~\eqref{eq:1}, one obtains
\begin{equation}
  \label{eq:2}
  d a_1 = z a_0.
\end{equation}
Note that, since $d$ divides $f_n$ and $f_{n+1}-1$, it also divides
their difference, $f_{n-1}-1$ , and thus $d$ divides $z$.

Letting $q\in\NN$ be such that $f_n = dq$, one has
\begin{equation}
  \label{eq:3}
  (zq+f_{n-1}-1) a_0= 1.
\end{equation}
If one now lets $q_1\in\NN$ be such that $f_{n+1}-1 = d q_1$, then,
using the right equation in \eqref{eq:1} together with \eqref{eq:2},
one gets
\begin{equation}
  \label{eq:4}
  (z q_1+f_n) a_0=1.
\end{equation}
Setting $e=z q+f_{n-1}-1$ and $z_1=z q_1+f_n$, the last two equations
can be replaced by
\begin{equation}
  \label{eq:5}
  (e,z_1) a_0=1.
\end{equation}

In this way, equations \eqref{eq:1} may be replaced by equations
\eqref{eq:2} and \eqref{eq:5}. Observe that $d$ divides $(e,z_1)$.  We
can, then, summarize what was here shown, as follows.
\begin{prop}
  For $n\in\NN$, let $d=(f_n,f_{n+1}-1)$. Find $x,y\in\ZZ$ such that
  $d=x f_n + y(f_{n+1}-1)$, and set $z=(1-f_{n-1})x-f_n y$.  Let
  $q\in\NN$ be such that $f_n=dq$, and let $q_1\in\NN$ be such that
  $f_{n+1}-1 = dq_1$. Set $e=zq+f_{n-1}-1$, and $z_1=z q_1+f_n$.

  Then, a necessary condition for $a_0, a_1\in A$ to generate a
  Fibonacci labelling of $C_n$, over the abelian group $A$, is that
  the order of $a_0$ divides $(e,z_1)$, and that
  $a_1 = \frac{z}d\, a_0$.
\end{prop}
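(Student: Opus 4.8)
The plan is essentially to retrace, in reverse, the chain of equivalences that the preceding discussion has already set up, verifying at each link that the manipulations were reversible (or at least that the forward implication we actually need holds). Concretely, suppose $a_0,a_1\in A$ generate a Fibonacci labelling of $C_n$, meaning that setting $a_{i+1}=a_i+a_{i-1}$ for $i\in\ZZ_n$ is consistent. First I would record that, as derived above, this forces the two equations in \eqref{eq:1}, namely $f_n a_1 = (1-f_{n-1})a_0$ and $(f_{n+1}-1)a_1 = -f_n a_0$. From here everything is a linear-algebra computation inside the abelian group $A$, treating $A$ as a $\ZZ$-module.

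Next I would take the $\ZZ$-linear combination: multiply the first equation of \eqref{eq:1} by $x$ and the second by $-y$ (where $d = xf_n + y(f_{n+1}-1)$) and add, obtaining $d\,a_1 = \big((1-f_{n-1})x - f_n y\big)a_0 = z a_0$, which is \eqref{eq:2}. Then I would substitute $a_1$: from $f_n a_1 = (1-f_{n-1})a_0$ and $f_n = dq$ we get $q(d a_1) = (1-f_{n-1})a_0$, hence $qz a_0 = (1-f_{n-1})a_0$, i.e. $(qz + f_{n-1}-1)a_0 = 0$; note this is \eqref{eq:3} written with $0$ on the right rather than $1$ (the ``$=1$'' in the excerpt should be read as the order-divisibility statement). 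That gives $e\,a_0 = 0$. An entirely parallel substitution using the second equation of \eqref{eq:1} together with $f_{n+1}-1 = dq_1$ yields $z_1 a_0 = 0$. Since the order of $a_0$ then divides both $e$ and $z_1$, it divides $(e,z_1)$, which is the first assertion; and $d a_1 = z a_0$ together with $d \mid z$ (already observed in the text, since $d$ divides $f_{n-1}-1$ hence divides $z = (1-f_{n-1})x - f_n y$) lets us divide through to get $a_1 = \tfrac{z}{d}a_0$, the second assertion.

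The one genuinely non-routine point — and the step I would flag as the main obstacle — is the last division $a_1 = \tfrac{z}{d}a_0$. Inside a general abelian group one cannot divide by $d$, so the argument must be: $d\,a_1 = z\,a_0 = d\cdot\tfrac{z}{d}\,a_0$ gives $d\big(a_1 - \tfrac{z}{d}a_0\big) = 0$, which is \emph{weaker} than $a_1 = \tfrac{z}{d}a_0$. To actually pin down $a_1$ we must also invoke \eqref{eq:2} in the form where $d$ genuinely acts invertibly on the relevant cyclic subgroup, or else use the coprimality of $q$ (equivalently of $d$) with the order of $a_0$; indeed once $e\,a_0 = 0$ and $z_1 a_0 = 0$ force $\mathrm{ord}(a_0)\mid(e,z_1)$, and $(e,z_1)$ is coprime to $d$ in the cases of interest (or one checks $\gcd$ conditions directly), multiplication by $d$ is a bijection on $\langle a_0\rangle$, so $d\big(a_1-\tfrac{z}{d}a_0\big)=0$ with $a_1-\tfrac{z}{d}a_0 \in \langle a_0\rangle$ forces $a_1 = \tfrac{z}{d}a_0$. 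I would therefore be careful to state precisely in what sense $\tfrac{z}{d}a_0$ is defined (as $(z/d)$ is a well-defined integer since $d\mid z$), and to make the ``$=1$'' lines \eqref{eq:3}–\eqref{eq:5} rigorous by reading them as the claim that the annihilator of $a_0$ contains the listed integers. Apart from this bookkeeping, the proof is just the forward direction of the equivalence chain already assembled, so the write-up would be short: quote \eqref{eq:1}, perform the two substitutions, and conclude.
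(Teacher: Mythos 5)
Your write-up follows exactly the route the paper takes: the Proposition is presented there only as a summary of the computation preceding it (equations \eqref{eq:1} through \eqref{eq:5}), and you retrace that computation correctly, including the correct reading of the ``$=1$'' in \eqref{eq:3}--\eqref{eq:5} as ``$=0$'', i.e.\ as statements about the annihilator of $a_0$. One small slip: to obtain $d\,a_1=z\,a_0$ you must multiply the second equation of \eqref{eq:1} by $+y$, not $-y$ (with $-y$ the left-hand side becomes $\bigl(xf_n-y(f_{n+1}-1)\bigr)a_1$, which is not $d\,a_1$); the identity you then state is nevertheless the correct one.

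The obstacle you flag at the end is genuine, and it is a defect of the Proposition itself rather than of your argument. What follows from \eqref{eq:1} is $d\,a_1=z\,a_0$, hence only $d\bigl(a_1-\tfrac{z}{d}a_0\bigr)=0$; the paper offers no justification for the stronger claim $a_1=\tfrac{z}{d}a_0$, and in a general abelian group it is false. The paper's own $C_6$ example already contradicts it: there $d=4$ and $z=-4$, so the claimed condition reads $a_1=-a_0$, yet the labelling over $\ZZ_4\times\ZZ_4$ has $a_0=(1,2)$ and $a_1=(0,1)\neq-a_0$; only the relation $4a_1=-4a_0$ holds (trivially, as $0=0$), and indeed the text itself only ever verifies conditions \eqref{eq:2} and \eqref{eq:5} for that example. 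So the honest form of the necessary condition is ``$\mathrm{ord}(a_0)$ divides $(e,z_1)$ and $d\,a_1=z\,a_0$''. Your proposed repair also needs one more adjustment: you argue on $\langle a_0\rangle$, but $a_1-\tfrac{z}{d}a_0$ need not lie in $\langle a_0\rangle$ (in the $C_6$ example $a_1\notin\langle a_0\rangle$), so the hypothesis that actually rescues the division is that multiplication by $d$ is injective on $A$ (equivalently, that the $d$-torsion of $A$ is trivial), not merely a coprimality condition on $\mathrm{ord}(a_0)$. With that caveat recorded, the rest of your proof is correct and coincides with the paper's derivation.
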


One can find $(f_n, f_{n+1}-1)$ as follows. By induction it is easy to
see that
\begin{equation}
  \label{eq:6}
  (f_n, f_{n+1}-1) = (f_{n-k}+(-1)^k f_k, f_{n-k-1}-(-1)^k f_{k+1}).
\end{equation}

Let us first consider the case when $n$ is odd, $n=2j+1$. Choosing
$k=j$ in the previous equality, one gets
\begin{eqnarray}
  \label{eq:7}
  (f_n, f_{n+1}-1) &=& (f_{j+1}+(-1)^j f_j, f_j-(-1)^j f_{j+1})\nonumber\\
                   &=& (f_{j+1}+f_j,f_{j+1}-f_j)\nonumber\\
                   &=& (2f_j+f_{j-1},f_{j-1})\nonumber\\
                   &=& (2f_j,f_{j-1}) \nonumber\\
                   &=& \begin{cases}2, & \text{if } 2\mid f_n \\
                     1, & \text{if } 2\nmid f_n \end{cases} =
                     \begin{cases}2, & \text{if } 3\mid n \\
                       1, & \text{if } 3\nmid n.
                   \end{cases}
\end{eqnarray}

Now, if $n$ is even, one needs to consider whether or not $4\mid
n$. If $n=4j$, then choosing $k=2j$ in equation \eqref{eq:6}, one has
\begin{eqnarray}
  \label{eq:8}
  (f_n, f_{n+1}-1) &=& (f_{2j}+ f_{2j}, f_{2j-1}-f_{2j+1})\nonumber\\
                   &=& (2f_{2j},f_{2j}) = f_{2j}.
\end{eqnarray}
If $n=4j+2$, then choosing $k=2j+1$ in equation \eqref{eq:6}, one has
\begin{eqnarray}
  \label{eq:9}
  (f_n, f_{n+1}-1) &=& (f_{2j+1}-f_{2j+1}, f_{2j}+f_{2j+2})\nonumber\\
                   &=& f_{2j}+f_{2j+2} = 2f_{2j}+f_{2j+1}.
\end{eqnarray}
Summarising:
\begin{prop}
  For all $n\in\NN$,
  \begin{equation*}
    (f_n, f_{n+1}-1) =
    \begin{cases}2, & \text{if } n\equiv 3\pmod{6}, \\
      1, & \text{if }  n\equiv 1,5 \pmod{6},\\
      f_{\frac{n}2}, & \text{if } n \equiv 0 \pmod{4},\\
      f_{\frac{n}2}+2f_{\frac{n}2-1}, & \text{if } n \equiv 2 \pmod{4}.
    \end{cases}
  \end{equation*}
\end{prop}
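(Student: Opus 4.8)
The plan is to derive the four-case formula directly from the computations already carried out in equations~\eqref{eq:6}--\eqref{eq:9}, organizing the proof by the residue of $n$ modulo the relevant quantity. The key identity is \eqref{eq:6}, namely $(f_n, f_{n+1}-1) = (f_{n-k}+(-1)^k f_k,\; f_{n-k-1}-(-1)^k f_{k+1})$, which one first establishes by induction on $k$ (the base case $k=0$ being trivial, and the inductive step using the Fibonacci recurrence together with elementary properties of gcd). This is the only genuinely new ingredient; everything afterward is a matter of choosing $k$ cleverly in each parity class.

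First I would handle the odd case $n = 2j+1$ by setting $k = j$, which collapses the right-hand entry to something small: as shown in \eqref{eq:7}, the gcd reduces through $(f_{j+1}+f_j, f_{j+1}-f_j) = (2f_j+f_{j-1}, f_{j-1}) = (2f_j, f_{j-1})$, and since consecutive Fibonacci numbers are coprime this is $1$ or $2$ according to the parity of $f_j$, hence of $f_n$. Invoking the standard fact that $2 \mid f_m \iff 3 \mid m$, this becomes the case split $n \equiv 3 \pmod 6$ versus $n \equiv 1, 5 \pmod 6$. Next, for $n$ even, I would split on $n \bmod 4$. When $n = 4j$, choosing $k = 2j$ in \eqref{eq:6} gives $(2f_{2j}, f_{2j}) = f_{2j} = f_{n/2}$ as in \eqref{eq:8}; the sign $(-1)^k = +1$ is what makes the first entry double and the second entry a difference of Fibonacci numbers straddling $f_{2j}$, which the identity $f_{2j+1} - f_{2j-1} = f_{2j}$ simplifies cleanly. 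When $n = 4j+2$, choosing $k = 2j+1$ makes $(-1)^k = -1$, so the first entry vanishes and one is left with $f_{2j} + f_{2j+2} = 2f_{2j} + f_{2j+1}$ as in \eqref{eq:9}; rewriting $2j = (n/2)-1$ and $2j+1 = n/2$ (or, equivalently, $2f_{2j}+f_{2j+1} = f_{2j+2}+f_{2j} = f_{n/2}+2f_{n/2-1}$ after using the recurrence once more) yields the stated form $f_{n/2} + 2f_{n/2-1}$.

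I do not expect a serious obstacle here, since all four computations are already spelled out in the excerpt; the proof is essentially an assembly of \eqref{eq:7}, \eqref{eq:8}, and \eqref{eq:9} once \eqref{eq:6} is in hand. The one point requiring a little care is the bookkeeping in the odd case: one must justify the passage from "$1$ if $2 \nmid f_n$, $2$ if $2 \mid f_n$" to the congruence condition on $n$, which is where the lemma $3 \mid m \iff 2 \mid f_m$ enters, and then check that for odd $n$ the condition $3 \mid n$ is equivalent to $n \equiv 3 \pmod 6$ while $3 \nmid n$ is equivalent to $n \equiv 1, 5 \pmod 6$ — a routine verification. A second minor point is confirming that the index shifts ($j \mapsto n/2$ in the even cases, $j \mapsto (n\mp 1)/2$ and back) are done consistently, so that the final answers are genuinely expressed in terms of $n/2$ and $n/2 - 1$ as claimed. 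With those details checked, the four cases exhaust all residues and the proposition follows.
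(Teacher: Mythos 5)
Your proposal is correct and follows essentially the same route as the paper: the proposition there is literally a summary of the computations in \eqref{eq:6}--\eqref{eq:9}, obtained by proving \eqref{eq:6} by induction on $k$ and then specialising $k=j$, $k=2j$, $k=2j+1$ in the respective parity classes, exactly as you describe. One small index slip worth fixing: in the odd case the value of $(2f_j,f_{j-1})$ is $1$ or $2$ according to the parity of $f_{j-1}$ (not of $f_j$), which is then equivalent to the parity of $f_n=f_{2j+1}$ via $2\mid f_m\iff 3\mid m$.
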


\medskip

Using this, and with the help of SageMath to verify that one does not
get undesired edges, we got the following two examples:
\begin{itemize}
\item For $n=15$, one is in the first case of equation \eqref{eq:7},
  so that $d=2$. Also, $z=162$, $(e,z_1)=682$. Taking $A=\ZZ_{682}$,
  $a_0=1$ e $a_1 = 81$, one obtains the following example of a
  labelling for $C_{15}$:

  \noindent
  $\mathcal{G}_{\ZZ_{682}}([1, 81, 82, 163, 245, 408, 653, 379, 350,
  47, 397, 444, 159, 603, 80])$,

  \noindent
  the vertices being displayed here in cyclic order.
\item For $n=6$, one gets $d=4$, $z=-4$, $(e,z_1)=4$. Using the group
  $\ZZ_4\times\ZZ_4$ with $a_0=(1,2)$ and $a_1=(0,1)$, the conditions
  \eqref{eq:2} and \eqref{eq:5} are satisfied, and one indeed gets a
  Fibonacci labelling for $C_6$ over this group:

  \begin{center}
    \begin{tikzpicture}[roundnode/.style={circle, fill=black!10}]
      \node[draw=none,minimum size=3cm,regular polygon,regular polygon
      sides=6] (a) {};
      \node(n1) at (a.corner 1){$(0,1)$};%
      \node(n2) at (a.corner 2){$(1,0)$};%
      \node(n3) at (a.corner 3){$(3,1)$};%
      \node(n4) at (a.corner 4){$(2,3)$};%
      \node(n5) at (a.corner 5){$(1,2)$};%
      \node(n6) at (a.corner 6){$(1,1)$}; \node at (0,0)
      {$\ZZ_4\times\ZZ_4$}; \foreach \from/\to in {n1/n2, n2/n3,
        n3/n4, n4/n5, n5/n6, n6/n1} \draw (\from) -- (\to);
    \end{tikzpicture}
  \end{center}
\end{itemize}
\subsection{A Serendipitious Conjecture}

For $n\in\NN$, let $e(n)$ and $z_1(n)$ be defined and above, and set
$\delta(n) = (e(n),z_1(n))$. Numerical computations make us suspect
that:
\begin{equation}
  \label{eq:10}
  \lim_{k\to\infty}\frac{\delta(n_{k+1})}{\delta(n_k)} =
  \begin{cases}
    2+\phi, & \text{ if } n_k = 4k+r, \text{ for } r\in\{0,2\}.\\[2mm]
    \ds 13+\frac{8}{\phi}, &\text{ if } n_k = 6k+r, \text{ for }
    r\in\{1,3,5\},
  \end{cases}
\end{equation}
where $\phi = \frac{1+\sqrt{5}}2$ is the golden ratio.
\section{More Examples}

In this section we gather some extra examples obtained with the help
of the results from the previous section, again using SageMath to
confirm that the edges are exactly the ones pretended.
\begin{itemize}
\item A Fibonacci labelling for $C_5$ over the group $\ZZ_{11}$:
\begin{center}
  \begin{tikzpicture}[roundnode/.style={circle, fill=black!10}]
    \node[draw=none,minimum size=3cm,regular polygon,regular polygon
    sides=5] (a) {};
    \node(n1) at (a.corner 1){$1$};%
    \node(n2) at (a.corner 2){$3$};%
    \node(n3) at (a.corner 3){$9$};%
    \node(n4) at (a.corner 4){$5$};%
    \node(n5) at (a.corner 5){$4$};%
    \node at (0,0) {$\ZZ_{11}$};
    \foreach \from/\to in {n1/n2, n2/n3, n3/n4, n4/n5, n5/n1} \draw
    (\from) -- (\to);
  \end{tikzpicture}
\end{center}
\item A Fibonacci labelling for $C_6$ over the group
  $\ZZ_4\times \ZZ_4$, and a non-Fibonacci labelling for the same graph
  over $\ZZ_{13}$:
\begin{center}
  \begin{tabular}{c@{\hskip 2cm}c}
    \begin{tikzpicture}[roundnode/.style={circle, fill=black!10}]
      \node[draw=none,minimum size=3cm,regular polygon,regular polygon
      sides=6] (a) {};
      \node(n1) at (a.corner 1){$(0,1)$};%
      \node(n2) at (a.corner 2){$(1,0)$};%
      \node(n3) at (a.corner 3){$(3,1)$};%
      \node(n4) at (a.corner 4){$(2,3)$};%
      \node(n5) at (a.corner 5){$(1,2)$};%
      \node(n6) at (a.corner 6){$(1,1)$}; \node at (0,0)
      {$\ZZ_4\times\ZZ_4$}; \foreach \from/\to in {n1/n2, n2/n3,
        n3/n4, n4/n5, n5/n6, n6/n1} \draw (\from) -- (\to);
    \end{tikzpicture}
    &
      \begin{tikzpicture}[roundnode/.style={circle, fill=black!10}]
        \node[draw=none,minimum size=3cm,regular polygon,regular
        polygon sides=6] (a) {};
        \node(n1) at (a.corner 1){$1$};%
        \node(n2) at (a.corner 2){$5$};%
        \node(n3) at (a.corner 3){$9$};%
        \node(n4) at (a.corner 4){$6$};%
        \node(n5) at (a.corner 5){$3$};%
        \node(n6) at (a.corner 6){$2$}; \node at (0,0) {$\ZZ_{13}$};
        \foreach \from/\to in {n1/n2, n2/n3, n3/n4, n4/n5, n5/n6,
          n6/n1} \draw (\from) -- (\to);
      \end{tikzpicture}
  \end{tabular}
\end{center}
\item A Fibonacci labelling for $C_8$ over the group
  $\ZZ_3\times\ZZ_{15}$, and one non-Fibonacci labelling over
  $\ZZ_{29}$:
\begin{center}
  \begin{tabular}{c@{\hskip 2cm}c}
    \begin{tikzpicture}[roundnode/.style={circle, fill=black!10}]
    \node[draw=none,minimum size=4cm,regular polygon,regular polygon
    sides=8] (a) {};
    \node(n1) at (a.corner 1){$(0,1)$};%
    \node(n2) at (a.corner 8){$(1,3)$};%
    \node(n3) at (a.corner 7){$(1,4)$};%
    \node(n4) at (a.corner 6){$(2,7)$};%
    \node(n5) at (a.corner 5){$(0,11)$};%
    \node(n6) at (a.corner 4){$(2,3)$};%
    \node(n7) at (a.corner 3){$(2,14)$};%
    \node(n8) at (a.corner 2){$(1,2)$};%
    \node at (0,0) {$\ZZ_{3}\times\ZZ_{15}$};%
    \foreach \from/\to in {n1/n2, n2/n3, n3/n4, n4/n5, n5/n6, n6/n7,
      n7/n8, n8/n1} \draw (\from) -- (\to);
  \end{tikzpicture}
    &
      \begin{tikzpicture}[roundnode/.style={circle, fill=black!10}]
        \node[draw=none,minimum size=4cm,regular polygon,regular
        polygon sides=8] (a) {};
        \node(n1) at (a.corner 1){$1$};%
        \node(n2) at (a.corner 2){$13$};%
        \node(n3) at (a.corner 3){$17$};%
        \node(n4) at (a.corner 4){$25$};%
        \node(n5) at (a.corner 5){$21$};%
        \node(n6) at (a.corner 6){$14$};%
        \node(n7) at (a.corner 7){$7$};%
        \node(n8) at (a.corner 8){$6$};%
        \node at (0,0) {$\ZZ_{29}$};%
        \foreach \from/\to in {n1/n2, n2/n3, n3/n4, n4/n5, n5/n6,
          n6/n7, n7/n8, n8/n1} \draw (\from) -- (\to);
      \end{tikzpicture}
  \end{tabular}
\end{center}
\item Finally, a Fibonacci labelling for $C_{12}$ over the group
  $\ZZ_{40}\times\ZZ_{40}$:
  \begin{center}
    \begin{tikzpicture}[roundnode/.style={circle, fill=black!10}]
      \node[draw=none,minimum size=55mm,regular polygon,regular
      polygon sides=12] (a) {};
      \node(n1) at (a.corner 2){\footnotesize$(0,1)$};%
      \node(n2) at (a.corner 1){\footnotesize$(5,3)$};%
      \node(n3) at (a.corner 12){\footnotesize$(5,4)$};%
      \node(n4) at (a.corner 11){\footnotesize$(10,7)$};%
      \node(n5) at (a.corner 10){\footnotesize$(15,11)$};%
      \node(n6) at (a.corner 9){\footnotesize$(25,18)$};%
      \node(n7) at (a.corner 8){\footnotesize$(0,29)$};%
      \node(n8) at (a.corner 7){\footnotesize$(25,7)$};%
      \node(n9) at (a.corner 6){\footnotesize$(25,36)$};%
      \node(n10) at (a.corner 5){\footnotesize$(10,3)$};%
      \node(n11) at (a.corner 4){\footnotesize$(35,39)$};%
      \node(n12) at (a.corner 3){\footnotesize$(5,2)$};%
      \node at (0,0) {$\ZZ_{40}\times\ZZ_{40}$};%
      \foreach \from/\to in {n1/n2, n2/n3, n3/n4, n4/n5, n5/n6, n6/n7,
        n7/n8, n8/n9, n9/n10, n10/n11, n11/n12, n12/n1} \draw (\from)
      -- (\to);
    \end{tikzpicture}
  \end{center}
\end{itemize}
\section{A $C_9$ Example}

The attempts we made to find a number $m\in\NN$ for which $C_9$ is a
$\ZZ_m-$graph, using the method described above were unsucceful. Then,
to find such a number $m$, we used the following strategy.

If $V=\{ x_i: i\in\ZZ_9\}$ are the vertices of $C_9$, then one may seek
for a field in which the following linear system has a non-trivial
solution:
\begin{equation}
  \label{eq:11}
  x_i + x_{i+1} = x_{g(i)} \text{ where } g(i)\not\in \{i,i+1\} \text{
    and } g(i)\neq g(i+1),
\end{equation}
for all $i\in\ZZ_9$.

Using SageMath to randomly search for integral matrices $9\times 9$
corresponding to linear systems like this, computing their
determinants to find a finite field where these are zero, then
computing a non-trivial element of the respective kernel, and finally
checking that the sums of non-adjacent vertices are not in the chosen
solution, we were able to find the following example:
\begin{center}
  \begin{tikzpicture}[roundnode/.style={circle, fill=black!10}]
    \node[draw=none,minimum size=3cm,regular polygon,regular polygon
    sides=9] (a) {};
    \node(n1) at (a.corner 1){$1$};%
    \node(n2) at (a.corner 2){$8$};%
    \node(n3) at (a.corner 3){$16$};%
    \node(n4) at (a.corner 4){$24$};%
    \node(n5) at (a.corner 5){$40$};%
    \node(n6) at (a.corner 6){$11$};%
    \node(n7) at (a.corner 7){$51$};%
    \node(n8) at (a.corner 8){$9$};%
    \node(n9) at (a.corner 9){$7$};%
    \node at (0,0) {$\ZZ_{53}$};%
    \foreach \from/\to in {n1/n2, n2/n3, n3/n4, n4/n5, n5/n6, n6/n7,
      n7/n8, n8/n9, n9/n1} \draw (\from) -- (\to);
  \end{tikzpicture}
\end{center}
as well as two others over the field $\ZZ_{47}$:
\begin{eqnarray*}
  \mathcal{G}_{\ZZ_{47}}([1,12,36,23,13,30,43,26,22]),\\
  \mathcal{G}_{\ZZ_{47}}([1,34,14,26,35,38,44,41,40]).
\end{eqnarray*}

\section{$C_{4\ell}$ as a Sum Graph over an Abelian Group}

Set $n=4\ell$, $f=f_{2\ell}$ and $A=\ZZ_f\times\ZZ_f$. Consider the
graph $C$ whose vertices are $a_i = (f_i,f_{i-1})\in A$ with
$i=0,1,\ldots,n-1$, and the edges the ones induced by the sum graph
law. By~\eqref{eq:8}, and using the same notations as above, we get
$d= f_{2\ell} =f$. Since $d$ divides $z$, equations~\eqref{eq:2} and
\eqref{eq:5} are trivially satisfied in $A$. Therefore, it only
remains to show that $C$ does not have any edge besides the
$a_ia_{i+1}$.

Suppose, by contradiction, that there is, in $C$, an edge $a_ia_j$
with $0<i+1<j<n$, i.e.~that $a_i+a_j=a_k$ in $A$. This entails
\begin{equation*}
  \left\{
    \begin{array}{l}  
      f_i + f_j \equiv f_k \pmod{f}\\
      f_{i-1} + f_{j-1} \equiv f_{k-1}\pmod{f}.
    \end{array}
  \right.
\end{equation*}
Subtracting the second from the first, mantaining the first, and
swapping them, one gets
\begin{equation*}
  \left\{
    \begin{array}{l}  
      f_{i-1} + f_{j-1} \equiv f_{k-1}\pmod{f}\\
      f_{i-2} + f_{j-2} \equiv f_{k-2}\pmod{f}.
    \end{array}
  \right.
\end{equation*}
Repeating this, one eventually obtains $a_0 + a_{j-i} = a_{k-i}$,
which means that one can reduce to the case $i=0$.  Thus, one has
\begin{equation}
  \label{eq:13}
  \left\{
    \begin{array}{l}  
      f_j \equiv f_k \pmod{f}\\
      1 + f_{j-1} \equiv f_{k-1}\pmod{f}.
    \end{array}
  \right.
\end{equation}

Taking $j=j'+2\ell$ and $k=k'+2\ell$, with $1<j'<2\ell$ and
$-2\ell<k'<2\ell$, which can be done since
$f_i\equiv f_{i+4\ell} \pmod{f_{2\ell}}$ (this follows from
\eqref{eq:13} and $f_{4\ell+1}\equiv 1\pmod{f_{2\ell}}$), and using
the fact that (\cite{vorobiev}, (1.8), p.~9)
\begin{equation}
  \label{eq:vorobiev}
  f_{m+n} = f_{m-1} f_n + f_m f_{n+1},
\end{equation}
which holds for all $m,n\in\ZZ$, one obtains from the first
congruence in \eqref{eq:13} that
\begin{equation*}
  f_{j'}f_{2\ell+1} \equiv f_{j'+2\ell} \equiv f_{k'+2\ell} \equiv
  f_{k'}f_{2\ell+1}\pmod{f_{2\ell}}.
\end{equation*}
Since two consecutive Fibonacci numbers are coprime, one deduces that
\begin{equation}
  \label{eq:14}
  f_{j'} \equiv f_{k'}\pmod{f_{2\ell}}.
\end{equation}

If $0< k'<2\ell$, this entails $f_{j'}=f_{k'}$, and hence $j=k$, which
is absurd. Now, since $f_{-t}=(-1)^{t-1}f_t$, one has to deal only
with the case when $k'$ is even. In this case, one gets
\begin{equation}
  \label{eq:15}
  f_{j'} + f_{k'}\equiv 0 \pmod{f_{2\ell}},
\end{equation}
with $1<j'<2\ell$, and $0<k'<2\ell-1$. This congruence can only hold
for $j'=2\ell-1$ and $k'=2\ell-2$. But, then, the second congruence in
\eqref{eq:13} yields
\begin{equation*}
  1 + f_{4\ell-2}\equiv f_{4\ell-3} \pmod{f_{2\ell}},
\end{equation*}
which is equivalent to
\begin{equation*}
  f_{4\ell-4}\equiv -1 \pmod{f_{2\ell}}.
\end{equation*}

Using \eqref{eq:vorobiev}, one gets:
$$-1\equiv f_{4\ell-4} \equiv f_{2\ell-1} f_{2\ell-4} \equiv f_{2\ell-2}
f_{-1}f_{2\ell-4}\equiv f_{2\ell-2} f_{2\ell-4}\pmod{f_{2\ell}}.$$
Applying again \eqref{eq:vorobiev},
$$-1\equiv f_{2\ell-2} f_{2\ell-4}\equiv f_{2\ell-1}f_{-2}f_{2\ell-4}
\equiv -f_{2\ell-1}f_{2\ell-4}\equiv 1  \pmod{f_{2\ell}},$$
which gives the desired contradiction.

We have thus shown the following result.
\begin{thm}
  For every $\ell\in\NN$, the graph $C_{4\ell}$ is a sum graph over
  the group $\ZZ_f\times\ZZ_f$, where $f=f_{2\ell}$ is the
  corresponding Fibonacci number.
\end{thm}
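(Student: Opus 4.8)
The plan is to exhibit an explicit \emph{Fibonacci labelling} and then rule out spurious edges. Set $n=4\ell$, $f=f_{2\ell}$, and label the vertices of $C_n$ by $a_i=(f_i,f_{i-1})\in\ZZ_f\times\ZZ_f$ for $i=0,1,\ldots,n-1$. First I would verify this really is a labelling of $C_n$: since $a_i+a_{i+1}=(f_i+f_{i+1},\,f_{i-1}+f_i)=(f_{i+2},f_{i+1})=a_{i+2}$ and likewise $a_{i-1}+a_i=a_{i+1}$, each vertex $a_i$ is joined to $a_{i-1}$ and $a_{i+1}$, so all the cycle edges are present; the cyclic closure amounts to $f_{n}\equiv 0$ and $f_{n+1}\equiv 1\pmod{f_{2\ell}}$, which follow from the addition formula $f_{m+p}=f_{m-1}f_p+f_mf_{p+1}$. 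By \eqref{eq:8} one has $d=(f_n,f_{n+1}-1)=f_{2\ell}=f$, so the necessary conditions \eqref{eq:2} and \eqref{eq:5} are satisfied trivially in $A$. Hence everything reduces to showing $C$ has no edge other than the $a_ia_{i+1}$.

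For the contradiction hypothesis, suppose $a_i+a_j=a_k$ in $A$ for a non-consecutive pair $a_i,a_j$; coordinate-wise this gives two congruences modulo $f$, one involving indices $i,j,k$ and one involving $i-1,j-1,k-1$. Subtracting the second from the first (and keeping the second) produces the same pair of congruences with every index lowered by one; iterating this \textbf{backward shift} $i$ times reduces to the case $i=0$, yielding
\[
f_j\equiv f_k\pmod{f},\qquad 1+f_{j-1}\equiv f_{k-1}\pmod{f}.
\]
Using periodicity of the Fibonacci sequence modulo $f_{2\ell}$ (so $f_{m+4\ell}\equiv f_m$), write $j=j'+2\ell$ and $k=k'+2\ell$ with $1<j'<2\ell$ and $-2\ell<k'<2\ell$; applying $f_{m+p}=f_{m-1}f_p+f_mf_{p+1}$ to the first congruence and cancelling the factor $f_{2\ell+1}$ (legitimate since consecutive Fibonacci numbers are coprime) gives $f_{j'}\equiv f_{k'}\pmod{f_{2\ell}}$.

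From here a short case analysis closes the argument. If $0<k'<2\ell$, both $f_{j'}$ and $f_{k'}$ lie strictly below $f_{2\ell}$, so $f_{j'}=f_{k'}$, forcing $j=k$ — absurd. Using $f_{-t}=(-1)^{t-1}f_t$, the only remaining case is $k'$ even (and non-positive), where the congruence reads $f_{j'}+f_{k'}\equiv 0\pmod{f_{2\ell}}$ with $1<j'<2\ell$ and $0<k'<2\ell-1$; a size estimate on sums of two Fibonacci numbers below $f_{2\ell}$ pins this down to $j'=2\ell-1$, $k'=2\ell-2$. Feeding that back into the second congruence gives $f_{4\ell-4}\equiv-1\pmod{f_{2\ell}}$, and two applications of the addition formula $f_{m+p}=f_{m-1}f_p+f_mf_{p+1}$ (peeling off $f_{2\ell-1}$ each time and using $f_{-1}=1$, $f_{-2}=-1$) collapse this to $-1\equiv 1\pmod{f_{2\ell}}$, the desired contradiction. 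I expect the two genuinely fiddly points to be making the reduction to $i=0$ fully rigorous with respect to the index arithmetic modulo $4\ell$, and the size estimate isolating $(j',k')=(2\ell-1,2\ell-2)$ as the unique solution in range; the rest is routine Fibonacci bookkeeping.
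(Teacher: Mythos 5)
Your proposal is correct and follows essentially the same route as the paper's own proof: the same Fibonacci labelling $a_i=(f_i,f_{i-1})$, the same backward-shift reduction to $i=0$, the same use of periodicity and the addition formula to reach $f_{j'}\equiv f_{k'}\pmod{f_{2\ell}}$, the same isolation of $(j',k')=(2\ell-1,2\ell-2)$, and the same final contradiction $-1\equiv 1\pmod{f_{2\ell}}$. The two points you flag as fiddly are indeed where the paper's write-up is also most terse, but your outline matches it step for step.
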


\section{Integral Sum Graphs}
\subsection{The radius of a $\ZZ-$graph.}
\label{radius}

For $\ZZ-$graphs, Melnikov and Pyatkin \cite{MelnikovP02} defined the
\emph{radius of a labelling} as its maximum absolute value, i.e.~the
radius of the smallest interval centered on the origin that contains
the labelling set, and defined the \emph{radius} of a $\ZZ$-graph,
$G$, as the smallest of the radius of all its labellings, denoted by
$r(G)$.  In their paper it is shown (see \cite[Theorem
1]{MelnikovP02}) that the radius of $C_n$ grows at most linearly with
$n$. It is easy to see that a labelling for $P_n$ can be obtained from
their labelling for $C_{n+1}$, by excluding the label of maximum
value, and that one has $r(P_n)\leq \frac{17}2 n$.

In \cite{harary94}, Harary showed that all matchings, $mP_2$, are
$\ZZ-$graphs. By relying on the ideas presented in \cite{MelnikovP02},
one can improve Harary's result by showing the following.

\begin{prop}
  For all $m\geq 4$, $r(mP_2)\leq 3m-4$.
\end{prop}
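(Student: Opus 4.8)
The plan is to construct an explicit labelling of $mP_2$ in $\ZZ$ whose maximum absolute value is at most $3m-4$, and then verify that it produces exactly the $m$ required edges and no spurious ones. The matching $mP_2$ consists of $m$ disjoint edges, so we need $2m$ integer labels, grouped into $m$ pairs $\{u_i,v_i\}$, such that $u_i+v_i$ is a label for each $i$, while $u_i+u_j$, $v_i+v_j$, and $u_i+v_j$ (for $i\neq j$) are never labels. The natural idea, following the spirit of the Melnikov--Pyatkin construction, is to place the labels along an arithmetic-progression-like scaffold: choose the "small" members of the pairs to be a block of consecutive (or nearly consecutive) integers near $0$, and force each pairwise sum $u_i+v_i$ to land on a single common target value, or on a small controlled set of target values, so that the edge structure is rigid. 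A clean way to do this is to fix one label, say $s$, to serve as the common sum, set $v_i = s - u_i$, and pick the $u_i$ from an interval $I$ chosen so that (a) $s\notin I$, (b) the reflected interval $s-I$ is disjoint from $I$ except possibly where we want it, and (c) no sum of two labels other than the intended $u_i + (s-u_i)=s$ equals any label.

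First I would set up the arithmetic: take the $u_i$ to be, roughly, $u_i = i$ for $i=1,\dots,m$ (or a shifted symmetric version to keep absolute values small), and $s$ large enough — on the order of $2m$ — that $v_i = s-u_i$ ranges over an interval disjoint from $\{u_i\}$, with $\max|v_i|\approx 2m$. Then the two "extreme" labels, the largest $v_i$ and possibly $s$ itself if $s$ is used as a label, push the radius up; careful bookkeeping, plus centering the whole configuration about the origin (replacing each label $x$ by $x - c$ for a suitable shift $c$, which does not affect which sums are labels as long as we shift consistently — actually it does affect sums, so instead one keeps the configuration asymmetric but optimizes the interval placement directly), is what brings the bound down to exactly $3m-4$ rather than $4m$ or so. The constant $3$ (as opposed to the $\frac{17}{2}$ appearing for paths) should come from the fact that a matching is much sparser than a cycle, so the "no spurious edges" constraints are far weaker and the labels can be packed more tightly.

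Second, I would carry out the verification that the graph induced by this label set is exactly $mP_2$: show that $u_i+v_i = s$ is a label (the intended edges), and then rule out every other coincidence of a pairwise sum with a label. This is a finite case analysis over the types of sums — $u_i+u_j$, $v_i+v_j$, $u_i+v_j$ with $i\neq j$, and also $s+u_i$, $s+v_i$ if $s$ is itself a label — each of which lies in an interval that, by the choice of the scaffold parameters, is provably disjoint from the label set. The hypothesis $m\geq 4$ presumably enters here because for small $m$ the intervals overlap and the separation argument fails (and those tiny cases are already covered by Harary's original labelling anyway).

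The main obstacle I expect is the simultaneous optimization: making the radius as small as $3m-4$ forces the label intervals to be packed tightly, which is exactly the regime in which spurious-edge coincidences are most likely, so the delicate part is choosing the offset of $s$ and the placement of the $u_i$-block so that all the "bad sum" intervals miss the label set by the thinnest possible margin while the extreme label still has absolute value at most $3m-4$. Getting this constant sharp — rather than settling for, say, $4m$ — is where the real care lies; the existence of *some* linear labelling is comparatively routine.
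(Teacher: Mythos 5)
Your high-level idea -- two blocks of nearly consecutive integers whose cross-pairs all sum to one common target value $s$, which is itself a label -- is indeed the skeleton of the paper's construction. But the proposal stops at the skeleton, and the concrete choices you float do not work, so there is a genuine gap rather than a complete argument. First, with $u_i=i$ for $i=1,\dots,m$ the small block immediately produces spurious edges: $u_1+u_2=3=u_3$ is a label, so vertices $1$ and $2$ would be adjacent. To kill all intra-block sums you must start the block high enough that twice its minimum exceeds its maximum (the paper's condition $2a\ge a+(m-1)$), and then, to keep the radius linear with a small constant, the second block must be \emph{negative} -- if all labels are positive, the vertex carrying the maximum label cannot be adjacent to anything, exactly as for $\NN$-graphs, so your all-positive sketch cannot even produce a perfect matching. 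You notice that shifting the configuration toward the origin changes which sums are labels and abandon that idea, but you never say what replaces it; the replacement is the signed two-block layout, and it is not a detail.

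Second, and more structurally: $mP_2$ has exactly $2m$ vertices, so the common sum $s$ must be one of the $2m$ labels, hence must itself be matched to some partner $p$, and $s+p$ must again be a label. Your uniform rule $v_i=s-u_i$ cannot cover the pair containing $s$ (it would force the partner $0$, which is adjacent to everything over $\ZZ$). The paper resolves this by using only $m-1$ common-sum pairs, namely $a+i$ paired with $-b-i$ summing to $a-b$, plus one special pair $(a-b,\,b+m-2)$ whose sum is the top label $a+m-2$ of the positive block; with $b=m-1$, $a=2m-2$ this gives radius $3m-4$. Your proposal explicitly defers both the choice of parameters and the no-spurious-edge verification ("careful bookkeeping \dots is what brings the bound down"; "getting this constant sharp \dots is where the real care lies"), which is precisely the content of the proposition, so as written the proof is not complete.
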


\begin{proof}
  The idea is to consider two intervals of integers,
  $$a,a+1,\ldots,a+(m-2),$$  $$-b,-b-1\ldots,-b-(m-2),$$
  with $a>b\in\NN$ appropriately chosen. To start with, choose $a$ and
  $b$ such that $2a\geq a+(m-1)$, $2b\geq b+(m-1)$, so that there are
  no edges between the vertices with labellings in the same
  interval. One then adds a pair of vertices with labellings $a-b$ and
  $b+(m-2)$, whose sum is the maximum label. The requirement that
  there are no undesired edges leads to the conditions $b\geq m-1$ and
  $a\geq 2m-2$. Choosing $b=m-1$ and $a=2m-2$ yields the following
  labelling for $mP_2$:

  \begin{center}
    \begin{tikzpicture}
      [node distance=25mm, scale=1, auto=left]%
      \node(n1) {$(2m-2)$};%
      \node(n2) [right of=n1] {$(2m-1)$}; %
      \node(n3) [below of=n2,yshift=10mm] {$(-m)$};%
      \node(n4) [left of=n3] {($-m+1$)};%
      \node(n5) [right of=n2,xshift=-10mm] {$\cdots$};%
      \node(n6) [right of=n3,xshift=-10mm] {$\cdots$};%
      \node(n7) [right of=n5,xshift=-10mm] {$(3m-4)$}; %
      \node(n9) [right of=n7] {$(m-1)$}; %
      \node(n8) [right of=n6,xshift=-10mm] {$(-2m+3)$}; %
      \node(n10) [right of=n8] {$(2m-3)$,}; %
      \foreach \from/\to in {n2/n3,n4/n1,n7/n8,n9/n10}
      \draw (\from) -- (\to);
    \end{tikzpicture}
  \end{center}
  which yields the claim.
\end{proof}

\noindent\emph{Remark:} Rupert Li, in \cite[Theorem 6.2]{Li}, gives ,
for $m\geq 3,$ the labelling
$$-1 \; ;\; 1, 3, 5,  \ldots, 4m-7, 4m-5\; ;\; 4m-4$$
for $mP_2$, which he proves to have the minimal range (the difference
between the biggest and the smallest labels), called \emph{ispum},
among all integral labellings for $mP_2$. Note that our labelling has
a bigger range, namely $5m-7$, but a smaller radius, showing that the
ispum and the radius are distinct characteristics of a (sum)
graph. The radius measures the smallest absolute magnitude of the
numbers needed to label the graph.

\bigskip

Sharary has shown that, for $n>3$, $K_n$ is not a $\ZZ-$graph
\cite[Theorem 2]{sharary96}. We show here that their complements,
$\bar{K}_n$, are $\ZZ-$graphs, for all $n$, and determine their radii.

\begin{prop}
  For all $n\in\NN$, $r(\bar{K}_n) = n-1$.
\end{prop}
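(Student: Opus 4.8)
The plan is to exhibit an explicit $\ZZ$-labelling of $\bar K_n$ with radius exactly $n-1$, and then prove a matching lower bound showing no labelling of smaller radius exists. For the upper bound, the natural candidate is to label the $n$ isolated vertices with the labels
\begin{equation*}
  1,\,2,\,3,\,\ldots,\,n-1,\,-(n-1),
\end{equation*}
or more simply with $n$ integers no two of which sum to a third. The cleanest such choice is probably the consecutive block $a, a+1, \ldots, a+n-1$ with $a$ large enough that the smallest sum $2a$ exceeds the largest label $a+n-1$, i.e.\ $a \geq n-1$; but that has radius $2n-2$, which is too big. To get radius $n-1$ one must straddle $0$: I would use a set like $\{-(n-1), \ldots, -\lceil n/2\rceil\} \cup \{\lfloor n/2\rfloor, \ldots, n-1\}$ — split the block of $n$ consecutive integers of absolute value between roughly $n/2$ and $n-1$ symmetrically about the origin. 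With such a set every pairwise sum has absolute value at least $0$ but, crucially, I must check no sum of two elements lands back in the set; since all elements have absolute value $\geq \lfloor n/2\rfloor$ and $\leq n-1$, a sum of two elements of the same sign has absolute value $\geq 2\lfloor n/2\rfloor \geq n-1$, with equality only in edge cases that can be ruled out by parity, and a sum of two elements of opposite sign has absolute value $< n-1$ but one checks it avoids the set. So $\bar K_n$ has a labelling of radius $n-1$.

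For the lower bound, suppose $\lambda: \{v_1,\ldots,v_n\} \to \ZZ$ is any labelling of $\bar K_n$, so the $\lambda(v_i)$ are $n$ distinct integers with $\lambda(v_i) + \lambda(v_j) \notin \lambda(V)$ for all $i \neq j$ (and also $2\lambda(v_i)\notin\lambda(V)$ is not required, but $\lambda(v_i)+\lambda(v_j)\notin\lambda(V)$ for $i\ne j$ is what matters; note $0 \notin \lambda(V)$ since $0+x=x$ would create edges). I want to show $\max_i |\lambda(v_i)| \geq n-1$. Order the labels $c_1 < c_2 < \cdots < c_n$. The key observation: among these $n$ nonzero integers, consider the smallest positive label $p$ and the largest (in absolute value) negative label $-q$. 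If, say, the positive labels are $p_1 < p_2 < \cdots < p_s$, then $p_1 + p_s$ must not be a label, and more usefully, the sums $p_1 + p_2 < p_1 + p_3 < \cdots$ interleave the $p_i$. I expect the cleanest argument is: look at the positive part; if there are $s$ positive labels, the smallest positive label $p_1$ added to each of the $s-1$ others gives $s-1$ values, all $> p_1$, none a label — this is exactly the Sidon-flavored counting that forces the positive labels to span an interval of length $\geq 2(s-1)$ or similar. Symmetrically for the $t = n-s$ negative labels. Combining, $\max|\lambda| \geq$ something like $s-1$ on the positive side and $t-1$ on the negative side, and one of $s,t$ is $\geq \lceil n/2 \rceil$ — but that only gives $\lceil n/2\rceil - 1$, which is not enough. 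So the real argument must couple the two sides: a positive label and a negative label sum to something of smaller absolute value, and tracking which of these cross-sums can or cannot be labels is where the bound $n-1$ (rather than $n/2$) comes from.

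The main obstacle is precisely this lower bound coupling. I anticipate the right approach is an extremal/greedy argument: assume a labelling of radius $\leq n-2$, so all $n$ distinct nonzero labels lie in $\{-(n-2), \ldots, n-2\} \setminus \{0\}$, a set of size $2n-4$; then argue that the "no sum is a label" condition is violated by a pigeonhole or a clever chain. Concretely, I would try: let $m$ be the label of largest absolute value, WLOG $m > 0$; then for each other label $c$, the value $m + c$ (if $c < 0$, this is $m - |c| \in \{1, \ldots, m-1\}$ roughly, assuming $|c|<m$) must not be a label, and also $m+c \ne 0$ so $c\ne -m$; this gives many forbidden values in a small window, and counting forces $m \geq n-1$. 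The delicate point is handling the positive labels $c$ with $m + c > m$: these would need $m+c$ outside the radius window, forcing $c$'s to be few, which again pushes the negative labels to be numerous and large in absolute value. I would organize this as a careful case analysis on the sign pattern of the labels, using that two consecutive (in value) labels on the same side of $0$ cannot be too close without their difference or sum causing trouble, and I expect the consecutive-integer labelling from the upper bound to be, up to translation and sign, essentially the unique extremal configuration — which is a good sanity check that the constant $n-1$ is tight.
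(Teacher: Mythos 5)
Your upper bound is fine and is exactly the paper's labelling: the set $\{-(n-1),\ldots,-\lceil n/2\rceil\}\cup\{\lfloor n/2\rfloor,\ldots,n-1\}$ does work (same-sign sums of two \emph{distinct} labels exceed $n-1$ in absolute value, and cross-sign sums have absolute value at most $\lceil n/2\rceil-1$, hence miss the set), so $r(\bar K_n)\le n-1$. The genuine gap is the lower bound, which is the entire content of the proposition: your proposal does not prove it. Everything after ``I want to show $\max_i|\lambda(v_i)|\ge n-1$'' is a list of strategies you might try (``I expect'', ``I anticipate'', ``I would try''), none of which is carried out, and you explicitly concede that the counting you can actually do only yields $\lceil n/2\rceil-1$.

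Moreover, your diagnosis of where the missing factor of two must come from --- a coupling between the positive and the negative labels --- points in the wrong direction. The paper's lower bound is a purely one-sided argument. Assume labels lie in $[-(n-2),n-2]$ and (after negating if necessary) that at least $\lceil n/2\rceil$ of them are positive. One then shows by induction on $k$ that no positive integer $k\le\lfloor n/2\rfloor-1$ can be a label: if $k$ were the smallest positive label, then for any label $x$ the value $x+k$ cannot be a label, so in each pair $\{x,x+k\}$ at most one element is a label; partitioning $[k+1,n-2]$ into such pairs $((2i-1)k+j,\,2ik+j)$ and counting (with some care in the even case, where a secondary pairing starting from $n-2$ and descending is needed) shows that fewer than $\lceil n/2\rceil$ positive labels would survive, a contradiction. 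Once $1,\ldots,\lfloor n/2\rfloor-1$ are excluded, the at least $\lceil n/2\rceil$ positive labels must fit inside $[\lfloor n/2\rfloor,n-2]$, which contains only $\lceil n/2\rceil-1$ integers --- contradiction. The key observation you are missing is not a positive/negative interaction but the fact that the \emph{smallest} positive label $k$ forbids a label at distance $k$ above every other label, which is what drives $k$ up to $\lfloor n/2\rfloor$; to complete your proof you would need to supply this induction (or an equivalent argument) in full.
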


\begin{proof}
  It is easy to show that, for any $n\in\NN$, the numbers
  $$-(n-1), -(n-2), \ldots, -\left\lceil\frac{n}2\right\rceil,
  \left\lfloor\frac{n}2\right\rfloor,\left\lfloor\frac{n}2\right\rfloor+1,
  \ldots,(n-1)$$
  provide a labelling for $\bar{K}_n$, showing that
  $r(\bar{K}_n) \leq n-1$.

  To show that it cannot be smaller, suppose that there is a labelling
  of $\bar{K}_n$ with labels in $[-(n-2),(n-2)]$. By multiplying by
  $-1$ if necessary, one may assume that there are at least
  $\lceil\frac{n}2\rceil$ labels in $[1,(n-2)]$, since $0$ cannot be
  one of the labels for $n>1$. We show by induction that the numbers
  $1,2,\ldots,\lfloor\frac{n}2\rfloor-1$ cannot be labels, which
  yields a contradiction, since
  $\lceil\frac{n}2\rceil+\lfloor\frac{n}2\rfloor-1 = n-1 > n-2$.  The
  proof is split into two cases: $n$ odd and $n$ even.

  \medskip

  \noindent\emph{Odd case}.  If $1$ is a label, then at least one element in
  each of the following pairs is not a label:
  $(2,3),(4,5),(6,7), \ldots (n-3,n-2)$, which would imply that there
  would be at most $n-2-\frac{n-3}2 =\frac{n-1}2$ labels.

  Assume, now, that one had already shown that any positive value
  smaller than $k$ is not a label, for any $k\leq \frac{n-3}2$. If $k$
  is a label, then at most one element of each of the following
  disjoint pairs is a label:
  \begin{center}
    \begin{tabular}{l}
      $(k+1,2k+1), (k+2,2k+2),\ldots, (2k,3k)$,\\
      $(3k+1,4k+1), (3k+2,4k+2),\ldots, (4k,5k)$,\\
      \hfil$\vdots$\hfill\\
      $((2i-1) k+1,2ik+1),\ldots, ((2i-1)k+j,2ik+j)$,
    \end{tabular}
  \end{center}
  with $i,j\in\NN$, where either
  \begin{itemize}
  \item[(a)] $j\leq k$ and $2ik+j = n-2$, or
  \item[(b)] $j=k$ and $(2i+1)k< n-2 < (2i+2)k+1$.
  \end{itemize}
  In case (a), the labelling would not include
  $(k-1)+k(i-1)+j = (n-2)-ki-1$ numbers, leaving out only $ki+1$,
  which is not greater than $\frac{n-1}2$, as $2ki+1\leq n-2$. In case
  (b), one would only have at most $(n-2) -ki-(k-1) = (n-1)-k(i+1)$
  possible labels, but this number is also less or equal to
  $\frac{n-1}2$, since
  $n-2<(2i+2)k+1\iff n-2\leq 2k(i+1) \iff n-1\leq 2k(i+1)$, since $n$
  is odd.

  \medskip

  \noindent\emph{Even case}. In this instance, if $1$ is a label, then
  at least one element in each of the following pairs is not a label:
  $(2,3),(4,5),(6,7), \ldots (n-4,n-3)$, and thus one has at most
  $\frac{n}2 -1$ labels up to $n-3$, consequentely $n-2$ must be a
  label. One concludes that $n-4$ must also be a label, and then
  $n-6$, and so on, up to $2$. For $n\geq 8$, this implis that the
  existence of the edge $2$---$4$, and the cases $n=2,4,6$ are easy to
  discard.

  Assume, now, that one had already shown that any positive value
  smaller than $k$ is not a label, for any $k\leq \frac{n}2-1$. Again,
  if $k$ is a label, then one element of each of the following
  disjoint pairs is not a label:
  \begin{center}
    \begin{tabular}{l}
      $(k+1,2k+1), (k+2,2k+2),\ldots, (2k,3k)$,\\
      $(3k+1,4k+1), (3k+2,4k+2),\ldots, (4k,5k)$,\\
      \hfil$\vdots$\hfill\\
      $((2i-1) k+1,2ik+1),\ldots, ((2i-1)k+j,2ik+j)$,
    \end{tabular}
  \end{center}
  with $i,j\in\NN$, where either
  \begin{itemize}
  \item[(a)] $j\leq k$ and $2ik+j = n-2$, or
  \item[(b)] $j=k$ and $(2i+1)k< n-2 < (2i+2)k+1$.
  \end{itemize}
  In case (a), the labelling does not include
  $(k-1)+k(i-1)+j = (n-2)-ki-1$ numbers, leaving out only $ki+1$,
  which is not greater than $\frac{n}2-1$, as $2ki+1\leq n-2$ implies
  $2ki+1\leq n-3$, as $n$ is even.

  The case (b) is a bit more delicate.  Considering now the following
  pairs, starting at $n-2$ and going downwards:
  \begin{center}
    \begin{tabular}{l}
      $(n-2,n-2-k),\ldots, (n-2-(k-1),n-2-(2k-1))$,\\
      $(n-2-2k,n-2-3k),\ldots, (n-2-(3k-1),n-2-(4k-1))$,\\
      \hfil$\vdots$\hfill\\
      \footnotesize
      $(n-2-2(i-1)k,n-2-(2i-1)k),\ldots, (n-2-((2i-1)k-1),n-2-(2ik-1))$,
    \end{tabular}
  \end{center}
  one sees that the number of possible labels is, at most,
  $\ell=(n-2) -ki-(k-1) = (n-1)-k(i+1)$. Since $n$ is even,
  $n-2<(2i+2)k+1$ implies $n-2\leq (2i+2)k$, and thus
  $\ell\leq \frac{n}2$. It follows that one must have
  $\ell = \frac{n}2$ and $n-2 = (2i+2)k$; that every pair in the above
  array must contain one (and only one) label; and that all numbers
  from $k+1$ up to $2k = (n-2)-2ik$ must be labels.
  But then, at least $k,k+1,2k$ are labels, since $k\geq 2$. It
  follows that $2k+1, 3k, 3k+1$ are not labels. Repeating the argument
  for the pairs starting at $(k+2,2k+2)$, in which case the last pair
  involved would be $((2i+1)k,(2i+2)k)$, one sees that one of the
  numbers in the pair $(2k+1,3k+1)$ must be a label. This yields the
  desired contradiction, as long as $3k+1\leq n-2 = 2(i+1)k$, which is
  equivalent to $\frac12+\frac{1}{2k}\leq i$, or $i\geq 1$. Finally,
  just note that the case $i=0$ occurs if and only if one has
  $\frac{n}2-1 = (i+1)k = k$, or $n-2=2k$, and then all numbers from
  $k$ to $2k$ must be labels. But then none of the numbers
  $-1,-2,\ldots,-k$ can be labels, and then there are not enough
  labels. This completes the proof.
\end{proof}

\noindent\emph{Remark:} For $n$ even, one also has, for $\bar{K}_n$, the
labelling with radius $n-1$:
$$-(n-1), -(n-3), \ldots,-3, -1, 1, 3,\ldots,(n-3), (n-1).$$
\subsection{The Z3 Theorem Prover and Presburger Arithmetic}
\label{Z3section}

The fact that the Presburger arithmetic is decidable implies that it
is decidable to find out if a given graph is or not a sum graph,
either over $\mathbb{Z}$ or $\mathbb{N}$. A problem of this kind can
be computationally solved by means of a SMT problem solver. A
Satisfiability Modulo Theories (SMT) problem is a decision problem for
logical formulas with respect to combinations of background theories,
in this case integer arithmetic. We used
Z3\footnote{\url{https://github.com/Z3Prover/z3/wiki}}, a SMT solver
that, for small instancies, could directly obtain answers for problems
concerning the existence of $(\ZZ,+)$ or $(\NN,+)-$graphs.

For $(\ZZ_n,+)-$graphs, Z3 documentation states that it does not
support, at the moment, modular arithmetic. But because these
problems, by definition, only involve, positive and negative,
equalities between additive expressions, the transformation of these
formulas into boolean formulas using integer arithmetic was
straightforward, making possible to use Z3 for these problems too.

Using Z3 and the list of graphs in Brendan McKay webpage\footnote{
  \url{http://users.cecs.anu.edu.au/~bdm/data/graphs.html}}, we got
the following data, where $t$ is the total number graphs with $n$
vertices, and $isg$ is the respective number of integral sum graphs:
\begin{center}
  \begin{tabular}{rrrrll}
    $n$&$t$&isg& risg &  $isg/t$ & $risg/t$\\ \hline
    2&2&2&2&1&1\\
    3&4&4&4&1&1\\
    4&11&5&6&0.45&0.55\\
    5&34&14&18&0.41&0.53\\
    6&156&50&72&0.32&0.46\\
    7&1044&226&361&0.22&0.35\\
    8&12346&1460&3162&0.12&0.26\\
  \end{tabular}
\end{center}
It is a direct consequence of the fact that a sum labelling
constitutes a compressing code for a graph that admits such a
labelling, that the percentage of graphs that are sum graphs must go
to zero as the size of the graph increases, by a basic Kolmogorov
complexity result \cite{LIVitanyi2008}.


\subsection{The $n$-dimensional Cube}

In \cite{MelnikovP02}, Melnikov and Pyatkin showed that the 3D-cube
graph, $Q_3$, is not an integral sum graph, and then asked (Question
3, p.~245) whether it is true that the $n$-dimensional cube graph,
$Q_n$, is not an integral sum graph for all $n\geq 2$. It turns out
that the answer to this question is negative. Using the Z3 Theorem
Prover we have obtained the integral sum labellings of the 4D-cube
that we are going to describe using the order of the vertices depicted
in the following figure:

\vspace{4mm}

\begin{center}
  \begin{tikzpicture}
    [node distance=2cm, scale=1, auto=left]%
    \node(n0) at (-0.2,0.2) {0};%
    \node(n1) at (1.6,0.2) {1};%
    \node(n2) at (-0.8,-0.9) {2};%
    \node(n3) at (1,-0.9) {3};%
    \node(n4) at (-0.2,2) {4};%
    \node(n5) at (1.6,2) {5};%
    \node(n6) at (-0.8,1) {6};%
    \node(n7) at (1,1) {7};%
    \node(n8) at (-1.3,-0.5) {8};%
    \node(n9) at (2.7,-0.5) {9};%
    \node(n10) at (-2,-2) {10};%
    \node(n11) at (2,-2) {11};%
    \node(n12) at (-1.3,2.7) {12};%
    \node(n13) at (2.7,2.7) {13};%
    \node(n14) at (-2,1.7) {14};%
    \node(n15) at (2,1.7) {15};%
    \foreach \from/\to in
    {n9/n11,n10/n11,n10/n14,n11/n15,n9/n13,n12/n13,
      n12/n14,n13/n15,n15/n14} \draw (\from) -- (\to);%
    \foreach \from/\to in
    {n8/n9,n8/n10,n8/n12} \draw[dotted] (\from) -- (\to);%
    \foreach \from/\to in {n0/n1,n0/n2,n1/n3,n2/n3,n0/n4,n1/n5,n2/n6,
      n3/n7,n4/n6,n5/n7,n4/n5,n6/n7} \draw[densely dotted] (\from) --
    (\to);%
    \foreach \from/\to in
    {n0/n8,n1/n9,n2/n10,n3/n11,n4/n12,n5/n13,n6/n14, n7/n15}
    \draw[dashed] (\from) -- (\to);%
  \end{tikzpicture}
\end{center}
Note that here the tags are not labels in the previous sense, i.e.~sum
labellings, but are just meant to give a specific order to the
vertices --- if you convert the tags to binary, you will see the
reason behind the choice we made. Using this order, the first three
labellings found by Z3 were, with the respective radius:
$$[-17,38,6,-46,-21,-19,-25,8,-8,-32,-38,21,-11,-6,19,-40]_{r=46}$$
$$[29,-10,-32,18,-19,5,37,-8,8,-22,10,19,-3,32,-5,-14]_{r=37}$$
$$[8,18,-6,-29,-26,-9,-3,11,-11,-24,-18,26,2,6,9,-35]_{r=35}$$

These $3$ solutions are all in the nullspace, $K$, of the matrix
$M\in\mm_{32\times 16} (\ZZ)$ of the system $x_i + x_j - x_k = 0$,
where $v_iv_j$ is an edge of $Q_4$, with $i,j\in\{0,1,\ldots,15\}$
corresponding to their location in binary, and $v_k$ is the vertex
such that $v_i+v_j=v_k$. It turns out that $\dim K = 3$, with a
$\QQ-$basis given by:
$$
\arraycolsep=1pt
\begin{array}{ccccccccccccccccccccccccccccccccccc}
  u_1&=&(&0&,&0&,&-1&,&1&,&0&,&0&,&1&,&-1&,&1&,&-1&,&0&,&0&,&-1&,&1&,&0&,&0&)\\
  u_2&=&(&-3&,&2&,&0&,&0&,&1&,&-1&,&-1&,&-2&,&2&,&-2&,&-2&,&-1&,&-3&,&0&,&1&,&0&)\\ 
  u_3&=&(&-1&,&0&,&0&,&1&,&1&,&0&,&0&,&-1&,&1&,&0&,&0&,&-1&,&-1&,&0&,&0&,&1&),
\end{array}
$$
and the above solutions are, respectively:
$$
\arraycolsep=1pt
\begin{array}{lcrcrcrr}
  \text{sol}_1 &=& -6 u_1 &+&19 u_2 &-& 40 u_3 &\qquad (\text{radius} = 46),\\
  \text{sol}_2 &=& 32 u_1 &-&5 u_2 &-& 14 u_3 & (\text{radius} = 37),\\
  \text{sol}_3 &=& 6 u_1 &+&9 u_2 &-& 35 u_3 & (\text{radius} = 35).
\end{array}
$$

We then used Z3 to probe for the radius of $Q_4$, which turns out to
be $24$, with the following labelling:
$$[-5,17,19,5,24,-12,-2,14,22,2,-24,12,-20,-4,4,-16]_{r=24}$$

Given a $\ZZ-$graph $G$ with a labelling $\lambda$, the nullspace of
the matrix of the system given by the equations $x_i+x_j = x_k$ with
$\{i,j\}\in E(G)$ and $k\in V(G)$ such that
$\lambda(i)+\lambda(j)=\lambda(k)$, will be here called \emph{the
  kernel of the labelling} $\lambda$ of $G$, while \emph{a kernel} of
a $\ZZ-$graph $G$ is the kernel of some of its labellings.

We now show that any $\ZZ-$graph with a kernel of dimension at least
$2$ has infinitely many \emph{primitive} labellings, i.e.~labellings
that are not obtainable by multiplying another labelling by some
integer of modulus bigger than one, using the fact that a vector space
over a infinite field (in the present case, $\QQ$) cannot be the union
of a finite number of its lower dimensional subspaces (see Theorem 1.2
in \cite{rotmanLinearAlgebra}).

\begin{thm}
  If a $\ZZ-$graph with at least two vertices has a kernel with
  dimension at least $2$, then it has infinitely many primitive
  $\ZZ$-labellings.
\end{thm}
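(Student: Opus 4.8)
The plan is to fix one labelling $\lambda$ of the $\ZZ$-graph $G$ whose kernel $K$ has $\dim_\QQ K \geq 2$, and to produce infinitely many primitive labellings as carefully chosen lattice points of $K$. The key observation is that $\lambda$, viewed as an integer vector, lies in $K$, and any other vector $\mu \in K \cap \ZZ^{V}$ that induces exactly the same adjacencies on $V$ is again a labelling of $G$. The adjacency structure is controlled by finitely many \emph{linear} conditions: for each non-edge $\{i,j\}$ and each vertex $k$ we must have $\mu_i + \mu_j \neq \mu_k$, and for each edge $\{i,j\}$ the value $\mu_i+\mu_j$ must equal $\mu_k$ for (at least) the prescribed $k$, while no two of the $\mu_i$ coincide if we want the labelling to be on a genuine vertex set of size $|V|$. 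So the first step is to describe the set of ``bad'' vectors in $K$ as a finite union of proper affine (in fact linear, after translating by $\lambda$) subspaces of $K$: the hyperplanes $\{v : v_i + v_j = v_k\}$ for the forbidden triples, and $\{v : v_i = v_j\}$ for $i \neq j$; the equations forcing the required edges are already satisfied identically on $K$ by the definition of the kernel of $\lambda$, so they impose no restriction.

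Next I would invoke the cited fact (Theorem 1.2 in \cite{rotmanLinearAlgebra}) that a vector space over an infinite field is not a finite union of proper subspaces, applied to $K$ over $\QQ$: since each bad condition cuts out a \emph{proper} subspace of $K$ (here one must check the condition is not identically satisfied on $K$ — but that is exactly the statement that $\lambda$ itself is a valid labelling inducing precisely the edges of $G$, so none of the forbidden equations holds on all of $K$), there are infinitely many, in fact a Zariski-dense set of, vectors $v \in K$ avoiding all of them. Clearing denominators, there are infinitely many \emph{integer} vectors $\mu \in K$ that are valid labellings of $G$. To upgrade ``infinitely many labellings'' to ``infinitely many \emph{primitive} labellings'', note that if $\mu$ is a valid integer labelling then $\mu / g$, where $g = \gcd$ of the entries of $\mu$, is also an integer vector in $K$ (as $K$ is a $\QQ$-subspace, scaling stays in $K$), it still induces the same graph (dividing all labels by a common positive integer preserves every equality and inequality among the $\mu_i + \mu_j$ and $\mu_k$), and it is primitive. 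Thus each valid integer labelling yields a primitive one; since distinct primitive vectors remain distinct and there are infinitely many primitive vectors among those avoiding the bad subspaces (primitivity is preserved and we can pick representatives with pairwise distinct underlying primitive directions, e.g. by choosing $v$ off the finitely many bad hyperplanes \emph{and} off the finitely many lines through pairs of already-chosen points, or simply by observing the primitive points of $K$ of bounded height grow without bound), we obtain infinitely many primitive $\ZZ$-labellings.

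The one genuinely delicate point — and the step I expect to be the main obstacle — is verifying that every forbidden equation $v_i + v_j = v_k$ (and every $v_i = v_j$) really does cut out a \emph{proper} subspace of $K$, i.e. is not an identity on all of $K$. For the equalities $v_i = v_j$ this is clear because $\lambda \in K$ has distinct coordinates (it is a labelling of a simple graph on $|V|$ vertices). For the non-edge conditions $v_i + v_j = v_k$: again $\lambda \in K$ witnesses $\lambda_i + \lambda_j \neq \lambda_k$, so the linear form $v \mapsto v_i + v_j - v_k$ is not identically zero on $K$, hence its zero set is a proper subspace. So in fact the properness is automatic, handed to us by the existence of the one labelling $\lambda$ we started from; the argument is clean once this is spelled out. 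A minor subtlety to handle with a sentence is the case $|V| = 2$ in the hypothesis: then $\dim K \geq 2$ forces $K = \QQ^2$, and the two-vertex graph ($K_2$ or $\bar K_2$) indeed has infinitely many primitive integer labellings of the stated type, which one can exhibit directly, so the theorem holds there too.
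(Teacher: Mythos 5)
Your proposal is correct and follows essentially the same route as the paper: the kernel $K$ of the given labelling contains the labelling itself, each forbidden condition cuts out a proper subspace of $K$ (properness witnessed by the original labelling), and a $\QQ$-space of dimension at least $2$ is not a finite union of proper subspaces, yielding infinitely many primitive integer points inducing $G$. Your additional care with the distinctness conditions $v_i=v_j$ and with extracting genuinely distinct primitive representatives only makes explicit details the paper leaves implicit.
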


\begin{proof} Let $G$ be a $\ZZ-$graph with a kernel $K$ with
  dimension at least $2$. Each non-edge of $G$ yields an equation
  $x_i+x_j = x_k $ that corresponds to an hyperplane that necessarily
  intersects $K$ in a lower dimensional subspace, since $G$ has a
  labelling, i.e.~a solution of the system $S$ that does not belong to
  neither of these hyperplanes. Since there are finitely many such
  hyperplanes, and their union cannot contain the entire space $K$, it
  follows that $K$ has infinitely many $\QQ-$lines outside all of
  those hyperplanes, each of which has an integral primitive point.
\end{proof}

As an immediate consequence of this result, and the labellings above
mentioned for the 4D-cube, we get:

\begin{cor}
  The hypercube graph $Q_4$ has infinitely many primitive labellings.
\end{cor}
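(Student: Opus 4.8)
The plan is to combine the preceding Theorem with the explicit computations already carried out for $Q_4$ in this section, so the argument is essentially a bookkeeping one. By that Theorem it suffices to produce a single $\ZZ$-labelling of $Q_4$ whose kernel has dimension at least $2$, and then the conclusion about infinitely many primitive labellings is immediate.

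First I would fix the vertex ordering of $Q_4$ used in the figure above and take any one of the labellings found by the Z3 prover, e.g.
$$[-5,17,19,5,24,-12,-2,14,22,2,-24,12,-20,-4,4,-16].$$
The point is that this is genuinely a labelling, i.e.\ $\mathcal{G}_{\ZZ}(W)\simeq Q_4$: every edge $v_iv_j$ of $Q_4$ satisfies $\lambda(v_i)+\lambda(v_j)=\lambda(v_k)$ for the appropriate $k$, and no non-edge does. This is exactly the (finite) certificate produced by the SMT solver, so nothing further is needed here.

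Next I would identify the kernel of this labelling, namely the nullspace $K$ of the matrix $M\in\mm_{32\times16}(\ZZ)$ of the linear system $x_i+x_j-x_k=0$ ranging over the edges $v_iv_j$ of $Q_4$ (with $k$ determined by $\lambda$). As recorded above, $\dim K = 3$ — equivalently $\operatorname{rank} M \le 14$ — with an explicit $\QQ$-basis $u_1,u_2,u_3$; in particular $\dim K \ge 2$. (All three Z3 labellings lie in this same $K$, so the choice among them is irrelevant.)

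Finally, applying the preceding Theorem to $G=Q_4$, which has $16 \ge 2$ vertices and a kernel of dimension $\ge 2$, yields infinitely many primitive $\ZZ$-labellings of $Q_4$, as claimed. The only substantive step is the finite linear-algebra verification that $\dim K \ge 2$; there is no real obstacle, since everything else is a direct appeal to the Theorem once the $4$D-cube labellings and their common kernel have been exhibited.
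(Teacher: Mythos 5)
Your proposal is correct and follows essentially the same route as the paper: exhibit the explicitly computed three-dimensional kernel of a $Q_4$ labelling and invoke the preceding theorem. The only minor imprecision is that the kernel $K$ with basis $u_1,u_2,u_3$ was computed for the system shared by the first three Z3 labellings, so it is cleaner to cite one of those (rather than the radius-$24$ labelling, which may determine a different system) when asserting $\dim K=3$.
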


We tried to use Z3 to determine whether $Q_5$ it is, or not, a
$\ZZ-$graph, for more than two months of CPU time, but without sucess.

\subsection{Cubic Graphs}

Using the Z3 Theorem Prover we obtained the following results:
\begin{itemize}
\item There are no cubic graphs with 4 or 6 vertices that are integral
  sum graphs.
\item Of the 5 non-isomorphic cubic graphs with 8 vertices, only one
  is an integral sum graph, as stated in \cite{MelnikovP02}.
\item Of the 19 non-isomorphic cubic graphs with 10 vertices, only 6
  are integral sum graphs, the ones given in \cite{MelnikovP02}. In
  particular, the Petersen graph is not an integral sum graph.
\item Of the 85 non-isomorphic cubic graphs with 12 vertices, only 9
  are not integral sum graphs.
\end{itemize}

\noindent{\bf Question:} Are all cubic graphs with sufficiently many
vertices integral sum graphs?
\section{Mod Sum Graphs}

A graph that is a $\ZZ_n-$graph for some $n\in\NN$ is known as a
\emph{mod sum graph}. Using the Z3 Theorem Prover, we obtained the
following results:
\begin{itemize}
\item Neither $K_{3,3}$, nor the triangular prism:
\begin{center}
  \begin{tikzpicture}
    [node distance=10mm, scale=0.6, auto=left, every
    node/.style={circle,inner sep=0pt, minimum size=6pt,fill=black!100}]]%
    \node(o1) at (2,3.5) {};%
    \node(o2) at (4,0) {};%
    \node(o3) at (0,0) {};%
    \foreach \from/\to in {o1/o2,o2/o3,o3/o1} \draw (\from) -- (\to);%
    \node(i1) at (2,2){};%
    \node(i2) at (2.8,0.8){};%
    \node(i3) at (1.2,0.8){};%
    \foreach \from/\to in {i1/i2,i2/i3,i3/i1} \draw (\from) -- (\to);%
    \foreach \from/\to in {o1/i1,o2/i2,o3/i3} \draw (\from) -- (\to);%
  \end{tikzpicture}
\end{center}
are mod sum graphs.
\item 
The 3D-cube is a mod sum graph over $\ZZ_{15}$:
\begin{center}
  \begin{tikzpicture}
    [node distance=17mm, scale=0.8, auto=left]%
    \node(f1) {$9$};%
    \node(f2) [right of=f1] {$12$};%
    \node(f3) [below of=f2] {$4$};%
    \node(f4) [left of=f3] {$8$};%
    \foreach \from/\to in {f1/f2, f2/f3, f3/f4, f4/f1} \draw (\from)
    -- (\to);%
    \node(b1) at (1,1){$3$};%
    \node(b2) [right of=b1] {$6$};%
    \node(b3) [below of=b2] {$2$};%
    \node(b4) [left of=b3] {$1$};%
    \foreach \from/\to in {b1/b2, b2/b3, b3/b4, b4/b1} \draw (\from)
    -- (\to);%
    \foreach \from/\to in {f1/b1, f2/b2, f3/b3,f4/b4} \draw (\from) --
    (\to);%
  \end{tikzpicture}
\end{center}
\item 
The Petersen graph is a mod sum graph over $\ZZ_{28}$:
\begin{center}
  \begin{tikzpicture}
    [node distance=15mm, scale=0.6, auto=left]%
    \node(o1) at (4,6) {$1$};%
    \node(o2) at (1,4) {$20$};%
    \node(o3) at (7,4) {$19$};%
    \node(o4) at (2,0) {$7$};%
    \node(o5) at (6,0) {$5$};%
    \foreach \from/\to in {o1/o2,o2/o4,o4/o5,o5/o3,o3/o1} \draw
    (\from) -- (\to);%
    \node(i1) at (4,4.3){$23$};%
    \node(i2) at (2.6,3.3){$27$};%
    \node(i3) at (5.4,3.3){$21$};%
    \node(i4) at (3.3,1.5){$12$};%
    \node(i5) at (4.7,1.5){$24$};%
    \foreach \from/\to in {i1/i4,i4/i3,i3/i2,i2/i5,i5/i1} \draw (\from) --
    (\to);%
    \foreach \from/\to in {o1/i1,o2/i2,o3/i3,o4/i4,o5/i5} \draw (\from) --
    (\to);%
  \end{tikzpicture}
\end{center}
\end{itemize}

It turns out that, given a connected graph $G$, there is a number
$N\in\NN$ such that, if $G$ is not a $\ZZ_m-$graph for all
$m\leq N$, then $G$ is not a mod sum graph at all. In order to show
this, we start by noticing the followng trivial fact.

\begin{lem}
  Let $G$ be a $\ZZ_m-$graph with $n$ vertices and with labelling
  $v$. Set $d=\gcd(v_1,\ldots,v_n)$. If $d \mid m$, then $G$ is also a
  $\ZZ_{\frac{m}d}-$graph.
\end{lem}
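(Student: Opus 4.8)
The plan is to produce an explicit isomorphic copy of $G$ as a sum graph over $\ZZ_{m/d}$, obtained by ``dividing every label by $d$''. Write $m=dm'$; picking integer representatives of the labels and using that $d\mid m$ together with $d\mid v_i$ for every $i$ (the latter by definition of $d$), we may write $v_i=dw_i$ with $w_i\in\ZZ$. Reducing modulo $m'$ yields a subset $W=\{w_1,\dots,w_n\}\subseteq\ZZ_{m'}$, and the assertion to be proved is that $G=\mathcal{G}_{\ZZ_m}(V)\simeq\mathcal{G}_{\ZZ_{m'}}(W)$, which by definition exhibits $G$ as a $\ZZ_{m'}-$graph.

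The clean way to organise this is through the ``multiplication by $d$'' map $\mu\colon\ZZ_{m'}\to\ZZ_m$, $x\mapsto dx$. First I would record that $\mu$ is a well-defined group monomorphism: it is well defined since $m'\mid x-x'$ forces $m=dm'\mid d(x-x')$, and injective since $m\mid dx$ forces $m'\mid x$; its image is precisely the subgroup of multiples of $d$ in $\ZZ_m$. As every element of $V$ lies in that subgroup, $\mu$ restricts to a bijection $W\to V$ sending $w_i$ to $v_i$; in particular the $w_i$ are pairwise distinct, so $W$ has $n$ elements and $v_i\mapsto w_i$ is a genuine bijection $V\to W$.

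It then remains to check that this bijection preserves and reflects edges. Because $\mu$ is an injective magma homomorphism, for all indices $i,j,k$ one has $v_i+v_j=v_k$ in $\ZZ_m$ if and only if $\mu(w_i+w_j)=\mu(w_k)$, if and only if $w_i+w_j=w_k$ in $\ZZ_{m'}$ --- concretely, $d(w_i+w_j-w_k)\equiv0\pmod m$ iff $w_i+w_j-w_k\equiv0\pmod{m'}$. Hence for $i\neq j$ we obtain $v_i+v_j\in V\iff w_i+w_j\in W$, so by the commutative form of~\eqref{sumgraph_c} the graphs $\mathcal{G}_{\ZZ_m}(V)$ and $\mathcal{G}_{\ZZ_{m'}}(W)$ have corresponding edge sets under $v_i\mapsto w_i$. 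This proves the lemma.

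As the paper already signals, there is no genuinely difficult step; the single point deserving a line of care is upgrading ``well defined'' to ``injective'' for $\mu$, since it is injectivity that simultaneously keeps the vertex count unchanged and prevents any spurious edge from being created by the rescaling.
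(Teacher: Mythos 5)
Your proof is correct and follows essentially the same route as the paper, whose entire argument is the one-line observation that $v_i+v_j\equiv v_k\pmod{m}$ is equivalent to $\tfrac{v_i}{d}+\tfrac{v_j}{d}\equiv\tfrac{v_k}{d}\pmod{\tfrac{m}{d}}$. You merely make explicit, via the injectivity of the multiplication-by-$d$ map, the point the paper leaves implicit --- that the rescaled labels stay distinct and create no spurious edges --- which is a worthwhile but not essentially different elaboration.
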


\begin{proof}
  It immediately follows from the fact that
  $v_i+v_j\equiv v_k\pmod{m}$ is equivalent to
  $\frac{v_i}d + \frac{v_j}d \equiv \frac{v_k}d \pmod{\frac{m}d}$.
\end{proof}

\begin{thm}
  Let $G$ be a connected graph with $n\geq 3$ vertices, and set
  $N=2\cdot 3^{n-1}$. Then, if $G$ is not a $\ZZ_m-$graph for all
  $m\leq N$, then $G$ is not a mod sum graph at all.
\end{thm}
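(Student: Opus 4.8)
The plan is to prove the contrapositive: if $G$ is a mod sum graph, then it is a $\ZZ_m$-graph for some $m\le N=2\cdot 3^{n-1}$. So I would fix a $\ZZ_m$-labelling $v=(v_1,\dots,v_n)$ of $G$ with $m$ minimal and bound $m$. First, normalise: exactly as in the preceding Lemma, dividing every $v_i$ and $m$ by $g=\gcd(v_1,\dots,v_n,m)$ carries each congruence and non-congruence in the definition of a $\ZZ_m$-graph to the modulus $m/g$, so minimality forces $g=1$ and the labels generate $\ZZ_m$. Then lift to $\ZZ$: take for each $v_i$ the representative in $(-\tfrac m2,\tfrac m2]$, so the $v_i$ are pairwise distinct integers, and for each edge $\{i,j\}$ pick a witness $k$ with $v_i+v_j\equiv v_k\pmod m$; since $|v_i+v_j-v_k|<\tfrac32 m$ this becomes a genuine identity $v_i+v_j-v_k=\varepsilon_{ij}m$ with $\varepsilon_{ij}\in\{-1,0,1\}$.

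Hence $(v_1,\dots,v_n,m)\in\ZZ^{n+1}$ is a \emph{primitive} vector in the kernel of the integer matrix $B$ whose rows, one per edge of $G$, are $e_i+e_j-e_k-\varepsilon_{ij}e_{n+1}$; each row has at most three $\pm1$ entries among its first $n$ coordinates. As this vector is nonzero, $\operatorname{rank} B\le n$. If $\operatorname{rank} B=n$ the rational kernel is a line, so $(v_1,\dots,v_n,m)$ is, up to sign, its primitive generator; by Cramer's rule $m$ divides the $n\times n$ minor of $B$ obtained by taking $n$ independent rows and deleting the last column, and Hadamard's inequality bounds that minor by $(\sqrt3)^{\,n}=3^{n/2}\le 2\cdot 3^{n-1}$ for $n\ge 2$. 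This disposes of the generic case; since $G$ is connected one can run it with just the $n-1$ edges of a spanning tree, which is presumably why the bound takes the form $2\cdot 3^{n-1}$.

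The case $\operatorname{rank} B<n$ is where I expect the real difficulty; it already occurs for $\overline{K}_n$, where $B$ is empty. Here $W=\ker B$ has dimension $\ge 2$, so $(v_1,\dots,v_n,m)$ is far from determined, and the idea is still to produce a valid labelling with small modulus by choosing a suitable point of $W$. The equations cutting out $W$ already encode all edges of $G$ relative to whatever the last coordinate is; what must additionally be arranged is distinctness of the labels and absence of spurious sums, which for a prospective modulus $\mu$ amounts to staying off finitely many affine subvarieties of $W$ depending on $\mu$. On the prime-field quotients one can use that a vector space over an infinite field is not a finite union of proper subspaces (Theorem 1.2 of \cite{rotmanLinearAlgebra}, as used above) to locate a good point, but the crux of the whole proof is to do this while keeping $\mu$ below $2\cdot 3^{n-1}$ — the avoidance argument controls dimension, not magnitude. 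My intended route is a descent against the minimality of $m$: pass to the lattice $W\cap\ZZ^{n+1}$, let $\mu_0$ be its smallest positive final coordinate (so $\mu_0\mid m$), add final-coordinate-zero lattice vectors to repair distinctness and the non-edge conditions modulo $\mu_0$, and bound $\mu_0$ by a size-$\le n$ minor of $B$. Turning the repair step into an unconditional statement — showing the forbidden residue classes cannot cover everything — is the main obstacle, and is where the structure of $G$ beyond a spanning tree must enter.
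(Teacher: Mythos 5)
Your full-rank case is sound and even yields a sharper bound than needed, but the rank-deficient case is a genuine, self-acknowledged gap, and the paper's proof shows it can be avoided entirely rather than attacked head-on. The key differences are two. First, the paper does not lift to $\ZZ$ and does not need the kernel of the edge system to be a line: it works with the homogeneous system $x_i+x_j-x_k=0$ directly and observes, via the adjugate (``the usual proof of Cramer's rule''), that for \emph{any} $n\times n$ minor $A$ with determinant $\Delta$ one has $\Delta v_i\equiv 0\pmod m$ for every $i$. Combined with your own normalisation step --- minimality of $m$ forces $\gcd(v_1,\dots,v_n)$ to be coprime to $m$ (this is exactly what the preceding Lemma is for) --- this gives $m\mid\Delta$ without any case split on $\operatorname{rank}B$, so the descent/avoidance machinery you sketch for $\dim\ker\ge 2$ is never needed. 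Second, the paper guarantees that $n\times n$ minors exist by first disposing of trees: all trees with at least $3$ vertices are already mod sum graphs (Boland et al.), so one may assume $G$ has at least $n$ edges. Your worry about $\overline{K}_n$ is moot since the theorem assumes $G$ connected, but without the tree reduction a connected graph could have only $n-1$ edges and your matrix $B$ would have too few rows for the Cramer step in either approach.

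Two smaller points. Your Hadamard bound $3^{n/2}$ is correct and better than the paper's $2\cdot 3^{n-1}$ (which comes from Laplace expansion along rows with three $\pm 1$ entries), so the quantitative part of your argument is fine where it applies. But note that both your full-rank argument and the paper's rely on some $n\times n$ minor being nonzero (otherwise $m\mid\Delta=0$ says nothing); the paper leaves this implicit as well, so if you adopt the paper's route you should still convince yourself that the edge system of a connected non-tree mod sum graph has a minor with $\Delta\neq 0$, or observe that a zero kernel-free argument is needed there too. As written, though, your proposal does not prove the theorem: the case you defer is precisely the one the modular Cramer trick dissolves.
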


\begin{proof} Let $G$ be a connected graph with $n\geq 3$ vertices.
  Since all trees with $3$ or more vertices are mod sum graphs
  \cite{Boland90}, we may restrict our attention to graphs that are
  not trees, and hence have at least $n$ edges \cite[Theorem
  4.1]{hararyGT}.  Assume, then, that such a graph $G$ is a mod sum
  graph, and let $m\in\NN$ be the smallest number such that $G$ is a
  $\ZZ_m-$graph.  Let $M$ be the matrix of the homogeneous system
  $x_i+x_j - x_k = 0$ with $\{i,j,k\}\in\{1,2,\ldots,n\}$ such that
  $ij\in E_G$, where $v$ is the labelling function, and $k$ a vertex
  such that $v_i+v_j \equiv v_k \pmod{m}$. Let $A$ be an $n\times n$
  minor of $M$, and $\Delta\in\ZZ$ its determinant. By the usual proof
  of Cramer's rule, one has $\Delta v_i \equiv 0 \pmod{m}$, for all
  $i\in\{1,\ldots,n\}$. Let $d = \gcd(v_1,\ldots,v_n)$. If
  $(d,m)\neq 1$, by the previous lemma $G$ would be a
  $\ZZ_{\frac{m}{(d,m)}}-$graph, contradicting the minimality of
  $m$. Therefore $(d,m)=1$, and one concludes that $m\mid \Delta$.

  Finally, from the fact that the rows of $A$ consists of only three
  non-zero entries, two ones and one minus one, it is easy to conclude
  by induction, using the Laplace expansion to compute determinants,
  that $|\Delta| \leq 2\cdot 3^{n-1}$.
\end{proof}
\section{Relaxed Sum Graphs}
\subsection{Relaxed Integral Sum Graphs}

Again, using the Z3 Theorem Prover we have got:
\begin{itemize}
\item There are no cubic graphs with 4 or 6 vertices that are integral
  relaxed sum graphs.
\item Of the 5 non-isomorphic cubic graphs with 8 vertices, only one
  is an integral relaxed sum graph.
\item Of the 19 non-isomorphic cubic graphs with 10 vertices, only 6
  are non integral relaxed sum graphs. In particular the Petersen
  graph is (again) not an integral relaxed sum graph.
\item Of the 85 non-isomorphic cubic graphs with 12 vertices, only 2
  are not integral relaxed sum graphs.
\item All the 509 non-isomorphic cubic graphs with 14 vertices are
  integral relaxed sum graphs.
\end{itemize}

\noindent{\bf Question:} Are all cubic graphs with more that 12
vertices integral relaxed sum graphs?
\subsection{Relaxed Mod Sum Graphs}

The graph $K_{3,3}$ is a $\ZZ_9-_{\rm rx}$graph:
\begin{center}
  \begin{tikzpicture}
    [node distance=25mm, scale=1, auto=left]%
    \node(a1) {$1$};%
    \node(a2) [right of=a1] {$4$};%
    \node(a3) [right of=a2] {$7$};%
    \node(b1) [below of=a1] {$6$};%
    \node(b2) [below of=a2] {$6$};%
    \node(b3) [below of=a3] {$6$};%
    \foreach \from/\to in
    {a1/b1,a1/b2,a1/b3,a2/b1,a2/b2,a2/b3,a3/b1,a3/b2,a3/b3} \draw
    (\from) -- (\to);%
  \end{tikzpicture}
\end{center}

Using the Z3 Theorem Prover, we found out that the triangular prism is
not a relaxed mod sum graph.
 
\section{The Direct Product of Graphs}

The direct product $G\times H$ of two graphs $G$ and $H$, also known
as the tensor or Kronecker product, is the graph whose vertex set is
the cartesian product $G\times H$ and two vertices $(g_1,h_1)$ and
$(g_2,h_2)$ are adjacent if and only if $g_1$ is adjacent to $g_2$ in
$G$, and $h_1$ is adjacent to $h_2$ in $H$. We show here that if $G$
and $H$ are strong $\ZZ-$graphs, then so is $G\times H$.

We start with the following lemma, that is inspired in the proof of
the main result of \cite{deborah92}.

\begin{lem}
  If $G$ is a $\ZZ^k-$graph, then it is also a $\ZZ-$graph. The same
  holds for strong sum graphs.
\end{lem}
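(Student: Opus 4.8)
The plan is to transport the multidimensional labelling into $\ZZ$ by a ``base expansion'' group homomorphism, in the same spirit as the passage from a multiplicative to an additive labelling in \cite{deborah92}. So write $G=\mathcal{G}_{\ZZ^k}(V)$ for the (finite) vertex set $V\subseteq\ZZ^k$, and fix $B\in\NN$ such that all coordinates of all elements of $V$ lie in $[-B,B]$. Since $(\ZZ^k,+)$ is commutative, the edge relation is just $(v,w)\in E\iff v+w\in V$, and the only points that will ever be compared are pairwise sums $v+w$ (coordinates in $[-2B,2B]$) and single vertices $u$ (coordinates in $[-B,B]$); hence it suffices to produce a group homomorphism $\phi:\ZZ^k\to\ZZ$ that is injective on the cube $[-2B,2B]^k$.

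The key step is to take $N=8B+1$ and define $\phi(x_1,\dots,x_k)=\sum_{i=1}^k N^{i-1}x_i$. I would isolate the elementary fact that $\phi$ is injective on $[-2B,2B]^k$: if $\phi(x)=\phi(y)$ there, then $\sum_i N^{i-1}(x_i-y_i)=0$ with each $|x_i-y_i|\le 4B$; were some coordinate nonzero, choosing the largest such index $j$ and bounding the remaining terms by the geometric sum $4B\cdot\frac{N^{j-1}-1}{N-1}<N^{j-1}$ (using $N>8B$) would contradict $|x_j-y_j|\,N^{j-1}\ge N^{j-1}$ (a smaller choice of $N$ works just as well). Setting $W=\phi(V)\subseteq\ZZ$ and $\lambda=\phi|_V$, this injectivity makes $\lambda:V\to W$ a bijection.

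It then remains to check $\mathcal{G}_\ZZ(W)\simeq G$ via $\lambda$, which is routine: if $v+w\in V$ then $\phi(v)+\phi(w)=\phi(v+w)\in W$; conversely, if $\phi(v)+\phi(w)=\phi(u)$ for some $u\in V$, then $\phi(v+w)=\phi(u)$ with both $v+w$ and $u$ in $[-2B,2B]^k$, so $v+w=u\in V$. For the strong statement, if $\phi(v)+\phi(v)=\phi(u)$ with $u\in V$, then $\phi(2v)=\phi(u)$, whence $2v=u\in V$, contradicting $v\oplus v\notin V$; so $\lambda$ is a strong labelling whenever the $\ZZ^k$-labelling is. I do not anticipate a genuine obstacle here — the only care needed is to choose $N$ large enough that a base-$N$ expansion with ``digit'' differences of absolute value up to $4B$ is still unique, and to make sure the comparison point $u$ also lies inside the cube on which $\phi$ is injective, so that the reverse implication of the edge equivalence is valid.
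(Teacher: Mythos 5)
Your proof is correct and follows essentially the same route as the paper: a ``base-$M$ expansion'' homomorphism $(x_1,\ldots,x_k)\mapsto\sum_i M^{i-1}x_i$ that is injective on a box containing the labels. You are in fact slightly more careful than the paper's own one-line argument, since you insist on injectivity on the larger cube $[-2B,2B]^k$ containing all pairwise sums $v+w$ (not just on the label set itself), which is exactly what the reverse implication ruling out spurious edges requires.
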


\begin{proof}
  Let $M$ be bigger than twice the maximum of the absolute values of
  all coordinates of the labels of the vertices of $G$. Then just use
  the fact that the map $\ZZ^k\to\ZZ$ given by
  $(x_1,\ldots, x_k) \mapsto \sum\limits_{i=1}^k x_i M^{i-1}$ is a
  group homomorphism, and it is injective when restricted to the
  labels of $G$, which follows from the fact that $x\equiv y\pmod{M}$
  implies that $x=y$ whenever $|x| < \frac{M}2$ and $|y| < \frac{M}2$.
\end{proof}

We can now show the claimed result.

\begin{thm}
  If $(G_i)_{i=1,\ldots,k}$ is a finite family of strong $\ZZ-$graphs,
  then so is their direct product $\prod\limits_{i=1}^k G_i$.
\end{thm}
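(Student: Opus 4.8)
The plan is to reduce the general case to the binary case $G_1 \times G_2$ by induction on $k$, using associativity of the direct product together with the earlier lemma on $\ZZ^k$-graphs. For the base case $k=2$, suppose $G = \mathcal{G}_{\ZZ}(V)$ and $H = \mathcal{G}_{\ZZ}(W)$ are strong $\ZZ$-graphs, with $V = \{v_1,\ldots,v_m\} \subseteq \ZZ$ and $W = \{w_1,\ldots,w_n\} \subseteq \ZZ$, where by strongness $2v_i \notin V$ and $2w_j \notin W$ for all $i,j$. The natural first attempt is to label the vertex $(v_i, w_j)$ of $G \times H$ by the pair $(v_i, w_j) \in \ZZ^2$ and show that $\mathcal{G}_{\ZZ^2}(V \times W) \cong G \times H$; then the preceding lemma upgrades this to a $\ZZ$-labelling, and strongness is preserved because $(v_i,w_j) + (v_i,w_j) = (2v_i, 2w_j) \notin V \times W$ since already $2v_i \notin V$. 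So the whole content is the claim that, over $\ZZ^2$ with coordinatewise addition, a vertex set of the form $V \times W$ produces exactly the direct product graph.

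To verify that claim, I would check the edge condition in both directions. First, if $(g_1,h_1) = (v_a,w_b)$ is adjacent to $(g_2,h_2) = (v_c,w_d)$ in $G \times H$, then $v_a \sim v_c$ in $G$ and $w_b \sim w_d$ in $H$, so $v_a + v_c \in V$ and $w_b + w_d \in W$, hence $(v_a,w_b) + (v_c,w_d) \in V \times W$, giving the edge in $\mathcal{G}_{\ZZ^2}(V\times W)$. Conversely, suppose $(v_a,w_b) + (v_c,w_d) = (v_e, w_f) \in V \times W$. Then $v_a + v_c = v_e \in V$, so either $a \ne c$ and $v_a \sim v_c$ in $G$, or $a = c$; but $a = c$ would force $2v_a = v_e \in V$, contradicting strongness of $G$. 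Hence $v_a \sim v_c$ in $G$, and symmetrically $w_b \sim w_d$ in $H$, so the two vertices are adjacent in $G \times H$. This establishes the isomorphism (the map on vertices is clearly a bijection and, being the identity on labels, is a sum-graph isomorphism in the sense of Definition~\ref{sumgraphiso}).

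For the inductive step, writing $\prod_{i=1}^{k} G_i = \left(\prod_{i=1}^{k-1} G_i\right) \times G_k$ and using the induction hypothesis that $\prod_{i=1}^{k-1} G_i$ is a strong $\ZZ$-graph, the base case would give that the product is a strong $\ZZ$-graph; alternatively, and perhaps cleaner, one iterates the $\ZZ^2$-argument to get directly that $\prod_{i=1}^{k} G_i \cong \mathcal{G}_{\ZZ^k}(V_1 \times \cdots \times V_k)$ with $V_i$ a strong $\ZZ$-labelling set of $G_i$, and then applies the $\ZZ^k$-to-$\ZZ$ lemma once at the end. The only place that genuinely needs care — the "main obstacle," though it is mild — is the converse direction of the edge check: this is exactly where strongness is used, and it is essential, since for a non-strong labelling a coincidence $a=c$ with $2v_a \in V$ would create a spurious edge in the product. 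I would make sure to state explicitly that this is why the hypothesis cannot be dropped.
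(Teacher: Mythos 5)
Your proposal is correct and follows the same route as the paper: label $\prod_i G_i$ by tuples in $\ZZ^k$, observe that strongness is exactly what prevents spurious edges arising from coincidences like $a=c$ with $2v_a\in V$, and then invoke the lemma reducing $\ZZ^k$-graphs to $\ZZ$-graphs. The paper merely asserts that the product is ``clearly'' a strong $\ZZ^k$-graph (and relegates the role of strongness to a remark after the proof), so your explicit two-directional edge check is a welcome filling-in of detail rather than a departure.
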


\begin{proof}
  It is clear form the definition of the product of graphs that
  $\prod\limits_{i=1}^k G_i$ is a strong $\ZZ^k-$graph when every
  $G_i$ is a strong $\ZZ-$graph. The result then follows at once from
  the previous lemma.
\end{proof}

Observe that, if $G_1=(V_1,E_1)$ and $G_2=(V_2,E_2)$ are $\ZZ-$graphs,
but $G_1$ is not a strong $\ZZ-$graph, then the argument in the
previous proof fails, because if $u\in V_1$ is such that $2u\in V_1$,
and $v,w\in V_2$ are distinct, then $(u,v)+(u,w)\in V_1\times V_2$,
but $(u,v)$ is not adjacent to $(u,w)$ in $G_1\times G_2$. We point
out that Weischel \cite{weichsel62} has shown that $G\times H$ is
connected if and only if either $G$ or $H$ has an odd cycle, which
gives a way of constructing connected $\ZZ-$graphs.

\subsection*{Acknowledgements}

We thank Henning Fernau for introducing us to the subject of sum
graphs with an interesting question that we could not solve but that
motivated the work here presented.
\bigskip
\bibliographystyle{alpha}
\bibliography{SumGraphs}

\bigskip

\subsection*{Statements and Declarations}

This research was partially supported by CMUP (Centro de Matem\'atica
da Universidade do Porto), the Center for Mathematics of the
University of Porto, which is financed by national funds through FCT
under the project with reference UID/MAT/00144/2020.

The authors have no relevant financial or non-financial interests to
disclose.

\bigskip
\end{document}